\newcommand{\cross}{\includegraphics[scale=0.5]{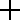}}
\newcommand{\elbows}{\includegraphics[scale=0.5]{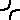}}
\newcommand{\elbow}{\includegraphics[scale=0.5]{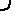}}
\newlength\cellsize \setlength\cellsize{12\unitlength}
\newcommand\cellify[1]{\def\thearg{#1}\def\nothing{}%
	\ifx\thearg\nothing\vrule width0pt height\cellsize depth0pt%
	\else\hbox to 0pt{\usebox2\hss}\fi%
	\vbox to 12\unitlength{\vss\hbox to 12\unitlength{\hss$#1$\hss}\vss}}
\newcommand\tableau[1]{\vtop{\let\\=\cr
		\setlength\baselineskip{-12000pt}
		\setlength\lineskiplimit{12000pt}
		\setlength\lineskip{0pt}
		\halign{&\cellify{##}\cr#1\crcr}}}
\newcommand{\CC}{{\mathbb{C}}}
\newcommand{\ZZ}{{\mathbb{Z}}}
\newcommand{\cO}{{\mathcal{O}}}
\newcommand{\cP}{{\mathcal{P}}}
\newcommand{\cQ}{{\mathcal{Q}}}
\newcommand{\cS}{{\mathcal{S}}}
\newcommand{\cT}{{\mathcal{T}}}
\DeclareMathOperator{\GL}{GL}
\DeclareMathOperator{\word}{word}
\theoremstyle{plain}
\newtheorem{theorem}{Theorem}[section]
\newtheorem{proposition}[theorem]{Proposition}
\newtheorem{corollary}[theorem]{Corollary}
\theoremstyle{definition}
\newtheorem{definition}[theorem]{Definition}
\newtheorem{example}[theorem]{Example}
\theoremstyle{remark}
\newtheorem{remark}[theorem]{Remark}
\DeclareMathOperator{\del}{del}
\DeclareMathOperator{\link}{link}
\DeclareMathOperator{\PD}{PD}
\DeclareMathOperator{\QPD}{QPD}
\DeclareMathOperator{\dst}{dst}
\newcommand{\xx}{\mathbf x}
\newcommand{\Sc}{\mathbf{S}}
\newcommand{\Sf}{\mathfrak S}
\newcommand{\Ff}{\mathfrak F}
\newcommand{\Gf}{\mathfrak G}
\newcommand{\Gc}{\mathcal G}
\title{Slide polynomials and subword complexes}
\author{Evgeny Smirnov}
\email{esmirnov@hse.ru}
\address{HSE University, Russian Federation, ul. Usacheva 6, 119048 Moscow, Russia}
\address{Independent University of Moscow, Bolshoi Vlassievskii per. 11, 119002 Moscow, Russia}
\author{Anna Tutubalina}
\email{anna.tutubalina@gmail.com}
\address{HSE University, Russian Federation, ul. Usacheva 6, 119048 Moscow, Russia}
\date{\today}
\begin{document}

\begin{abstract}
Subword complexes were defined by A.\,Knutson and E.\,Miller in 2004 for describing Gröbner degenerations of matrix Schubert varieties. Subword complexes of a certain type are called pipe dream complexes. The facets of such a complex are indexed by pipe dreams, or, equivalently, by the monomials in the corresponding Schubert polynomial. In 2017, 
S.\,Assaf and D.\,Searles defined a basis of slide polynomials, generalizing Stanley symmetric functions, and described a combinatorial rule for expanding Schubert polynomials in this basis. We describe a decomposition of subword complexes into strata called slide complexes.The slide complexes appearing in such a way are shown to be  homeomorphic to balls or spheres.  For pipe dream complexes, such strata correspond to slide polynomials. 
\end{abstract}

\maketitle

\section{Introduction}\label{sec:intro}

\subsection{Schubert polynomials and pipe dreams}
Schubert polynomials $\Sf_w\in\ZZ[x_1,x_2,\dots]$ were defined by I.\,N.\,Bernstein, I.\,M.\,Gelfand and S.\,I.\,Gelfand \cite{BernsteinGelfandGelfand73} and by A.\,Lascoux and M.-P.\,Schützenberger \cite{LascouxSchutzenberger82}. They can be viewed as ``especially nice'' polynomial representatives of classes of Schubert varieties $[X_w]\in H^*(G/B)$, where $G=\GL_n(\CC)$ is a general linear group, $B$ is a Borel subgroup in $G$, and $G/B$ is a full flag variety. It is well known that their coefficients are nonnegative, and there exists a manifestly positive combinatorial rule for computing these coefficients. 

 One can also be interested in the $K$-theory $K_0(G/B)$. Instead of Schubert classes $[X_w]\in H^*(G/B)$, one would consider the classes of their structure sheaves $[\cO_w]\in K_0(G/B)$. These classes also have a nice presentation, known as Grothendieck polynomials $\Gf^{(\beta)}_w\in\ZZ[\beta,x_1,x_2,\dots]$, depending on an additional parameter $\beta$. They also have integer nonnegative coefficients, but, as opposed to Schubert polynomials, they are not homogeneous in the usual sense; however, they become homogeneous if we set $\deg\beta=-1$. They can be viewed as ``deformations'' of the Schubert polynomials $\Sf_w$: evaluating $\Gf^{(\beta)}_w$ at $\beta=0$, we recover the corresponding Schubert polynomial $\Sf_w=\Gf^{(0)}_w$.

Schubert and Grothendieck polynomials can be described combinatorially by means of diagrams called \emph{pipe dreams}, or \emph{rc-graphs}. These diagrams are configurations of pseudolines associated to a permutation; to each such diagram one can assign a monomial. A pipe dream is said to be \emph{reduced} if each pair of pseudolines intersects at most once. The Schubert (resp. Grothendieck) polynomial for a permutation $w$ is obtained as the sum of the corresponding monomials  for reduced (resp. not necessarily reduced) pipe dreams associated to $w$. This theorem, due to S.\,Billey and N.\,Bergeron \cite{BilleyBergeron93} and to S.\,Fomin and An.\,Kirillov \cite{FominKirillov96}, is an analogue of Littlewood's presentation of Schur polynomials as sums over Young tableaux. In particular, this implies positivity of the coefficients of Schubert and Grothendieck polynomials. A brief reminder on pipe dreams is given in \S~\ref{ssec:pipedreams}.

In \cite{KnutsonMiller05}, A.\,Knutson and E.\,Miller proposed a geometric interpretation of pipe dreams for a permutation $w$: they correspond to the irreducible components of a ``deep'' Gröbner degeneration of the corresponding matrix Schubert variety $\overline{X_w}$ to a union of affine subspaces. A combinatorial structure of this union of subspaces is encoded by a certain simplicial complex, known as the \emph{pipe dream complex} for $w$. From this, one can deduce that the multidegree of $\overline{X_w}$ with respect to the maximal torus $T\subset B\subset G$ equals the Schubert polynomial $\Sf_w$. 

In the subsequent paper \cite{KnutsonMiller04} the same authors put the notion of a pipe dream complex into a more general context, defining \emph{subword complexes} for an arbitrary Coxeter system, and proved that such complexes are shellable and, moreover,  homeomorphic to balls or, in certain ``rare'' cases, to spheres. This implies many interesting results about the geometry of the corresponding Schubert varieties, both matrix and usual ones, including new proofs for normality and Cohen--Macaulayness of Schubert varieties in a full flag variety.

\subsection{Slide and glide polynomials} Recently, S.\,Assaf and D.\,Searles \cite{AssafSearles17} defined \emph{slide polynomials} $\Ff_Q$. This is another family of polynomials with properties similar to Schubert polynomials: in particular, they form a basis in the ring of polynomials in countably many variables and enjoy a \emph{manifestly positive} Littlewood--Richardson rule. They are indexed by  pipe dreams $Q$ with an extra combinatorial condition, usually called the \emph{quasi-Yamanouchi pipe dreams}. This condition is similar to the Yamanouchi condition for skew Young tableaux; the precise definitions are given in \S~\ref{ssec:slidepoly}.

 Moreover, there exist combinatorial positive formulas for expressing Schubert polynomials in the slide basis: each Schubert polynomial is expressed as a linear combination of slide polynomials with coefficients 0 or 1. 

Slide polynomials also have  a $K$-theoretic counterpart: \emph{glide polynomials} $\Gc^{(\beta)}_Q$, defined by O.\,Pechenik and D.\,Searles in \cite{PechenikSearles19} (note that the names ``Schubert'' and ``Grothendieck'' also start with S and G, respectively). Similarly, there are explicit expressions of Grothendieck polynomials via glide polynomials.

\subsection{Slide complexes} The main objects defined in this paper are analogues of subword complexes corresponding to slide polynomials. We call them \emph{slide complexes}.  Each subword complex can be subdivided into slide complexes. We show that these complexes are shellable (Theorem~\ref{thm:shellable-slides}). Our main result, Theorem~\ref{thm:main}, states that each slide complex arising as a stratum in a subword complex is homeomorphic to a ball or a sphere.

In the case of pipe dream complexes, from a slide complex we can recover the corresponding slide and glide polynomials: the slide (resp. glide) polynomial is obtained as the sum of monomials corresponding to facets (resp. all interior faces) of the corresponding complex. This provides us with a topological interpretation of the combinatorial expression for $\Sf_w$ via $\Ff_Q$ and of $\Gf^{(\beta)}_w$ via $\Gc_Q^{(\beta)}$ (Corollaries~\ref{cor:glide} and~\ref{cor:slide}).

\subsection{Possible relation with degenerations of matrix Schubert varieties} In this paper we are dealing only with combinatorial constructions and do not address the geometric picture. It would be interesting to explore the relation of slide polynomials with degenerations of matrix Schubert varieties. A natural question is as follows: for a matrix Schubert variety $\overline{X_w}$, does there exist an ``intermediate degeneration'' $\overline{X_w}\to \bigcup \overline{Y_{w,Q}}$, with the irreducible components indexed by quasi-Yamanouchi pipe dreams of shape $w$, such that the multidegree of each irreducible component $\overline{Y_{w,Q}}$ is equal to the slide polynomial $\Ff_Q$? This would, in particular, provide a geometric interpretation of the Littlewood--Richardson coefficients for slide polynomials, studied by S.\,Assaf and D.\,Searles in~\cite{AssafSearles17}.


\subsection{Structure of the paper} This text is organized as follows. In Sec.\,\ref{sec:poly} we recall the definitions of Schubert and Grothendieck polynomials, provide their description using pipe dreams, and describe slide and glide polynomials in terms of pipe dreams. Sec.\,\ref{sec:subword} contains the definition of a subword complex for an arbitrary Coxeter system. We also recall the proof of its shellability and that it is homeomorphic either to a ball or to a sphere. Then we focus on the most important particular case of pipe dream complexes. The main results of this paper are contained in Sec.\,\ref{sec:sc}: in \S~\ref{ssec:sc-gen} we define a decomposition of a subword complex for an arbitrary Coxeter system into strata called slide complexes and show that these strata are shellable and homeomorphic either to balls or to spheres. In~\S~\ref{ssec:scpd} we show that the decomposition of a pipe dream complex into slide complexes corresponds to the presentation of the corresponding Schubert (resp. Grothendieck) polynomial as a sum of slide (resp. glide) polynomials. The last subsection, \S~\ref{ssec:pilaud}, describes  the relation of slide complexes with the flip graphs considered in~\cite{PilaudStump13}.

\subsection*{Acknowledgements} We are indebted to Sami Assaf, Alexander Gaifullin, Valentina Kiritchenko and Allen Knutson  for fruitful discussions. We are especially grateful to Oliver Pechenik for pointing out a simpler proof of the main result. We would like to thank the anonymous referee whose valuable remarks significantly improved the exposition. This research was supported by the HSE University Basic Research Program and by the Theoretical Physics and Mathematics Advancement Foundation ``BASIS''. E.S. was also partially supported by the RFBR grant 20-01-00091-a and the Simons--IUM Fellowship.

\section{Schubert, Grothendieck, slide and glide polynomials}\label{sec:poly}

\subsection{The symmetric group}\label{ssec:symgroup}
 We will denote by $\Sc_n$ the symmetric group on $n$ letters, i.e. the group of bijective maps from $\{1,\dots,n\}$ onto itself. It is generated by the simple transpositions $s_i=(i\leftrightarrow i+1)$ for $1\leq i\leq n-1$, modulo the Coxeter relations:
 
 \begin{itemize}
 \item $s_i^2=Id$;
 \item $s_is_j=s_js_i$ for $|i-j|\geq 2$ (far commutativity);
 \item $s_is_{i+1}s_i=s_{i+1}s_is_{i+1}$ for each $i=1,\ldots, n-2$ (braid relation).
 \end{itemize}
 
 We will use the one-line notation for permutations: for example, $w=\overline{1423}$ brings $1$ to $1$, $2$ to $4$, $3$ to $2$, and $4$ to $3$.
 
 Each permutation $w\in \Sc_n$ can be expressed as a product $w=s_{i_1}\dots s_{i_k}$ of simple transpositions. We will say that $w$ is presented by the \emph{word} $(s_{i_1},\dots,s_{i_k})$. The minimal length of a word presenting $w$ is called the \emph{length} of $w$ and denoted by $\ell(w)$. The word presenting $w$ is  said to be \emph{reduced} if its length equals $\ell(w)$. It is well known that $\ell(w)$ is equal to the number of inversions in $w$, i.e. 
 \[
 \ell(w)=\#\{(i,j)\mid 1\leq i<j\leq n, w(i)>w(j)\}.
 \]
 
The longest permutation in $\Sc_n$ will be denoted by $w_0$. This is the permutation that maps $i$ into $n+1-i$ for each $i$; clearly, $\ell(w_0)=\binom{n}{2}=\frac{n(n-1)}{2}$.

This permutation has several reduced presentations; later we will need the following one:
\[
w_0=(s_{n-1}\dots s_3s_2s_1)(s_{n-1}\dots s_3s_2)(s_{n-1}\dots s_3)\dots(s_{n-1}s_{n-2})(s_{n-1}).
\]

\subsection{Schubert and Grothendieck polynomials}\label{ssec:schubpoly}

Denote the set of variables $x_1,\ldots, x_n$ by~$\xx$ and consider the polynomial ring $\ZZ[\xx]$. The group $\Sc_n$ acts on this ring by interchanging variables:
\[
w\circ f(x_1,\ldots, x_n)=f(x_{w(1)},\ldots,x_{w(n)}).
\]
\begin{definition}
	For $i=1,\ldots,n-1$, we define the \emph{divided difference operators} $\partial_i:\ZZ[\xx]\to\ZZ[\xx]$ as follows:
\[
\partial_i f(\xx)=\frac{f(\xx)-s_i\circ f(\xx)}{x_i-x_{i+1}}.
\]
\end{definition}	
Since the numerator is antisymmetric with respect to  $x_i$ and $x_{i+1}$, it is divisible by the denominator, so the ratio is indeed a polynomial with integer coefficients.

The divided difference operators satisfy the Coxeter relations:
\begin{itemize}
	\item $\partial_i^2=0$,
	\item $\partial_{i}\partial_j=\partial_j\partial_i$  if $|i-j|\geq 2$,
	\item $\partial_{i} \partial_{i+1} \partial_{i}=\partial_{i+1} \partial_{i} \partial_{i+1}$ for each $i=1,\ldots, n-2$.
\end{itemize}

\begin{definition}
	\emph{Schubert polynomials} $\Sf_w$ are defined as the elements of $\ZZ[\xx]$ indexed by  permutations $w\in\Sc_n$  and satisfying the relations
		$$
	\mathfrak S_{Id}=1,
	$$
	$$
	\partial_{i} \mathfrak S_{w}=\begin{cases}
	\mathfrak{S}_{w s_{i}}, & \text { if } \ell\left(w s_{i}\right)<\ell(w), \\
	0, & \text { otherwise}
	\end{cases}
	$$
	for each $i=1,\ldots, n-1$. 
\end{definition}
 	
A.\,Lascoux and M.-P.\,Schützenberger \cite{LascouxSchutzenberger82} have shown that the Schubert polynomials are uniquely determined by these relations. Equivalently, they can be constructed by using the recurrence relation
\[
 	\mathfrak{S}_{w s_{i}}(\mathbf{x})=\partial_{i} \mathfrak{S}_{w}(\mathbf{x}), \text{ if } \ell(ws_i)<\ell(w),
\]
with the initial condition
\[
\mathfrak{S}_{w_0}(\mathbf x)=x_1^{n-1}x_2^{n-2}\ldots x_{n-2}^2x_{n-1}.
\]
This recurrence relation can be written as follows: if $s_{i_k}\ldots s_{i_1}$ is a reduced word for a permutation $w_0w$, then
\[
\mathfrak{S}_w=\partial_{i_1}\ldots\partial_{i_k}\mathfrak{S}_{w_0}.
\]
Since the divided difference operators satisfy the far commutativity and braid relations, and every reduced word for $w_0w$ can be transformed into any other reduced word just by these two operations, $\Sf_w$ is well defined (i.e. does not depend upon a choice of the reduced word).

Grothendieck polynomials were introduced by A.\,Lascoux in~\cite{Lascoux90}. We will use their deformation, $\beta$-Grothendieck polynomials, introduced by S.\,Fomin and An.\,Kirillov in~\cite{FominKirillov94}. Sometimes we will refer to them simply as to Grothendieck polynomials. Their definition is similar to the definition of Schubert polynomials, but instead of $\partial_i$ we need to use the \emph{isobaric divided difference operators} $\pi^{(\beta)}_i$.

\begin{definition}
Let $\beta$ be a formal parameter. For $i=1,\ldots,n-1$ define the \emph{$\beta$-isobaric divided difference operators} $\pi^{(\beta)}_i:\ZZ[\beta,\xx]\to\ZZ[\beta,\xx]$:
\[
\pi^{(\beta)}_if(\xx)=\frac{(1+\beta x_{i+1})f(\xx)-(1+\beta x_i)s_i\circ f(\xx)}{x_i-x_{i+1}}.
\]
\end{definition}
Just like the divided difference operators, their isobaric counterparts also satisfy the Coxeter relations:
	\begin{itemize}
		\item $\pi^{(\beta)}_{i}\pi^{(\beta)}_j=\pi^{(\beta)}_j\pi^{(\beta)}_i$ for $|i-j|\geq 2$,
		\item $\pi^{(\beta)}_{i} \pi^{(\beta)}_{i+1} \pi^{(\beta)}_{i}=\pi^{(\beta)}_{i+1} \pi^{(\beta)}_{i} \pi^{(\beta)}_{i+1}$ for each $i=1,\ldots, n-2$.
	\end{itemize}
\begin{definition}
Define \emph{$\beta$-Grothendieck polynomials} $\Gf^{(\beta)}_w$ using the initial condition
		$$\Gf^{(\beta)}_{w_0}(\mathbf x)=x_1^{n-1}x_2^{n-2}\ldots x_{n-2}^2x_{n-1}$$
and the recurrence relation
\[
\Gf^{(\beta)}_w=\pi^{(\beta)}_{i_1}\ldots\pi^{(\beta)}_{i_k}\Gf^{(\beta)}_{w_0},
\]
where $s_{i_k}\ldots s_{i_1}$ is a reduced word for the permutation $w_0w$.
\end{definition}
Since the operators $\pi^{(\beta)}_i$ satisfy the Coxeter relations, these polynomials are also well-defined. One can immediately see that, since $\pi^{(0)}_i=\partial_i$ and $\Gf^{(\beta)}_{w_0}=\Sf_{w_0}$, we have $\Gf^{(0)}_w=\Sf_w$ for each $w\in\Sc_n$.	So setting in $\Gf^{(\beta)}$ the parameter $\beta=0$, we recover the Schubert polynomials.

\subsection{Pipe dreams}\label{ssec:pipedreams}

In this subsection we discuss pipe dreams: the main combinatorial tool for dealing with Schubert and Grothendieck polynomials. 

\begin{definition} 
Consider an $(n\times n)$-square and fill it with the elements of two types:
\emph{crosses} $\cross$ and \emph{elbows} $\elbows$ in such a way that all the crosses are situated strictly above the antidiagonal. We will omit the elbows situated below the antidiagonal. This diagram is called a \emph{pipe dream}, or an \emph{rc-graph} (``RC'' stands for ``reduced compatible'').
	
Each pipe dream can be viewed as a configuration of $n$ strands joining the left edge of the square with the top edge. Let us index the initial and terminal points of these strands by the numbers from 1 to $n$, going from top to bottom and from left to right.
	
	Pipe dream is said to be  \emph{reduced} if every pair of strands crosses at most once and \emph{nonreduced} otherwise.
	
\end{definition}	

The following figure provides an example of reduced and non-reduced pipe dreams.
	\begin{figure}[ht]
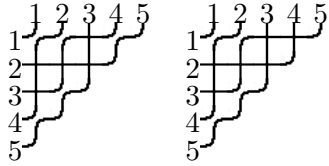

		$$
		\begingroup
		\setlength\arraycolsep{0pt}
		\renewcommand{\arraystretch}{0.07}
		\begin{matrix}
		& 1 & 2 & 3 & 4 & 5  \\
		1 & \elbows & \elbows & \cross & \elbows & \elbow& \\
		2 & \cross & \cross & \cross & \elbow& \\
		3 & \cross & \elbows & \elbow&  \\
		4 & \elbows & \elbow&\\
		5 & \elbow& \\
		\end{matrix}
		\quad
		\begin{matrix}
		& 1 & 2 & 3 & 4 & 5  \\
		1 & \elbows & \elbows & \cross & \cross & \elbow& \\
		2 & \cross & \cross & \cross & \elbow& \\
		3 & \cross & \elbows & \elbow& \\
		4 & \elbows & \elbow& \\
		5 & \elbow& \\
		\end{matrix}
		\endgroup
		$$
		\caption{\label{pipedreams} A reduced and a non-reduced pipe dream}
	\end{figure}
\begin{definition}

Each pipe dream can be viewed as a bijective map from the set of the initial points of the strands to the set of its terminal points. Let us assign to each \emph{reduced} pipe dream $P$ the corresponding permutation $w(P)\in\Sc_n$. It will be called the \emph{shape} of $P$. Moreover, to each pipe dream $P$ we associate the set  $D_P$ of the coordinates of its crosses (the first and the second coordinates stand for the row and the column numbers, respectively).
\end{definition} 
For example, the shape of the left pipe dream $P$ at Fig.\,\ref{pipedreams} equals $w(P)=\overline{15423}$, and the set $D_P$ is equal to $ D_P=\{(1,3),(2,1),(2,2),(2,3),(3,1)\}$.

\begin{definition}
Define the \emph{reduction} operation  $\mathrm{reduct}$ on the set of pipe dreams as follows: we read the rows of a pipe dream from top to bottom, reading each row from right to left. Each time we find a crossing of two strands that have already crossed before (i.e. above), we replace this cross by an elbow.
\end{definition}

\begin{figure}[ht]
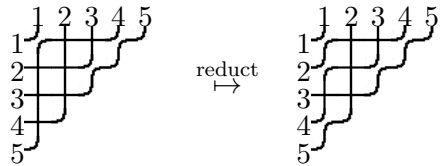

	$$
	\begingroup
	\setlength\arraycolsep{0pt}
	\renewcommand{\arraystretch}{0.07}
	\quad\begin{matrix}
	& 1 & 2 & 3 & 4 & 5  \\
	1 & \elbows & \cross & \cross & \elbows & \elbow & \\
	2 & \cross & \cross & \elbows & \elbow & \\
	3 & \cross & \cross & \elbow & \\
	4 & \cross & \elbow & \\
	5 & \elbow & \\
	\end{matrix}
	\quad
	\overset{\mathrm{reduct}}{\mapsto}
	\quad
	\begin{matrix}
	& 1 & 2 & 3 & 4 & 5  \\
	1 & \elbows & \cross & \cross & \elbows & \elbow & \\
	2 & \elbows & \cross & \elbows & \elbow & \\
	3 & \cross & \cross & \elbow & \\
	4 & \elbows & \elbow & \\
	5 & \elbow & \\
	\end{matrix}
	\endgroup
	$$
	\caption{\label{reduct} The reduction operation $\mathrm{reduct}$ applied to a non-reduced  pipe dream}

\end{figure}

Obviously, $\mathrm{reduct}(P)$ is reduced for each $P$, and the operation acts trivially on reduced pipe dreams: $\mathrm{reduct}(P)=P$. We say that the \emph{shape} of a nonreduced pipe dream $P$ is the shape $w(\mathrm{reduct}(P))$ of its reduction.

The set of all pipe dreams of a given shape $w\in\Sc_n$ will be denoted by $\PD(w)$. We will also denote the subset of reduced pipe dreams of this shape by  $\PD_0(w)\subset\PD(w)$.

The following theorem was  proved by S.\,Billey and N.\,Bergeron and independently by S.\,Fomin and An.\,Kirillov.

\begin{theorem}\cite{BilleyBergeron93,FominKirillov96}\label{thm:FK}
The Schubert polynomials satisfy the equality
\[
\mathfrak S_w=\sum_{P \in \PD_0(w)}\mathbf x^P,
\]
where
\[
\mathbf x^P:=\prod_{(i,j)\in D_P}x_i.
\]
\end{theorem}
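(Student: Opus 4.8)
\textbf{Proof proposal for Theorem~\ref{thm:FK}.}

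The plan is to prove the identity $\Sf_w=\sum_{P\in\PD_0(w)}\xx^P$ by induction on the codimension $\binom{n}{2}-\ell(w)=\ell(w_0w)$, exploiting the recurrence that defines $\Sf_w$ together with a matching recurrence on the combinatorial side. The base case is $w=w_0$, where $\ell(w_0w)=0$. Here I would check directly that there is exactly one reduced pipe dream of shape $w_0$, namely the ``top-justified'' triangular configuration filling every cell strictly above the antidiagonal with a cross; its monomial is $\prod_{i+j\le n}x_i=x_1^{n-1}x_2^{n-2}\cdots x_{n-1}$, which is precisely $\Sf_{w_0}$ by the initial condition in the definition of Schubert polynomials. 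That this pipe dream is reduced, and that it is the only one of its shape, follows because $\ell(w_0)=\binom{n}{2}$ equals the number of crosses, forcing every pair of strands to cross exactly once.

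For the inductive step, suppose the formula holds for some $w$ with $\ell(w)>0$, and let $i$ be such that $\ell(ws_i)<\ell(w)$, so that $\Sf_{ws_i}=\partial_i\Sf_w$. The key step is to understand the action of the divided difference operator $\partial_i$ on the generating function $\sum_{P\in\PD_0(w)}\xx^P$ in purely combinatorial terms. I would establish a bijective/recursive description of how reduced pipe dreams of shape $ws_i$ arise from those of shape $w$: concretely, one analyzes the effect of $\partial_i$ monomial by monomial, using that $\partial_i(x_i^a x_{i+1}^b\,g)=g\cdot\partial_i(x_i^ax_{i+1}^b)$ for $g$ symmetric in $x_i,x_{i+1}$, and matches this against the local moves on pipe dreams in rows $i$ and $i+1$ (adding or removing a cross in the appropriate position, i.e. the ``ladder''/chute-type moves relating $\PD_0(w)$ and $\PD_0(ws_i)$). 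The heart of the argument is to verify that $\partial_i\bigl(\sum_{P\in\PD_0(w)}\xx^P\bigr)=\sum_{P'\in\PD_0(ws_i)}\xx^{P'}$, which reduces, by linearity and the Leibniz-type behavior of $\partial_i$, to a bijection between pipe dreams that exactly accounts for the telescoping $\partial_i(x_i^a+x_i^{a-1}x_{i+1}+\dots+x_{i+1}^a)=x_i^{a-1}+\dots+x_{i+1}^{a-1}$.

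An alternative and perhaps cleaner route, which I would consider in parallel, is the original transition-recurrence approach of Billey--Bergeron--Fomin--Kirillov: one fixes the entry of $w$ determining the first descent or the appropriate Monk-type recursion, and shows that both sides satisfy the same recurrence in a variable, say $x_1$, by peeling off the first row of the pipe dream. This amounts to proving a ``first-row'' recursion: the pipe dreams of shape $w$ with a prescribed pattern of crosses in row $1$ correspond, after deleting row $1$, to pipe dreams of a smaller permutation in $\Sc_{n-1}$, and the induced recursion matches the known recursion for Schubert polynomials under the isobaric/Monk expansion.

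\textbf{Main obstacle.} The delicate point is the combinatorial bookkeeping in the inductive step: verifying that the local pipe dream moves induced by $\partial_i$ are genuinely bijective and compatible with shape, i.e. that applying $\partial_i$ neither creates pipe dreams of the wrong shape nor double-counts, and that the cancellation inherent in $\partial_i$ (the $x_i-x_{i+1}$ denominator) corresponds exactly to removing non-reduced or out-of-range configurations. Controlling reducedness under these moves — ensuring each pair of strands still crosses at most once after the local modification — is where the argument requires the most care, and is the step I expect to occupy the bulk of the proof.
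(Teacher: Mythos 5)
The paper does not prove Theorem~\ref{thm:FK}: it is quoted from Bergeron--Billey and Fomin--Kirillov with a citation, so there is no in-paper argument to compare yours against, and your proposal must be judged on its own. Your overall plan is the standard one: the base case is correct and complete as you state it (the unique reduced pipe dream of shape $w_0$ has $\binom{n}{2}$ crosses and monomial $x_1^{n-1}x_2^{n-2}\cdots x_{n-1}$), and the inductive strategy via $\Sf_{ws_i}=\partial_i\Sf_w$ is exactly the route realized in the literature by Knutson and Miller's ``mitosis'' operation in~\cite{KnutsonMiller05}, while your alternative ``first-row peeling'' is close to what Bergeron and Billey actually do in~\cite{BilleyBergeron93} (via the Billey--Jockusch--Stanley compatible-sequence formula); Fomin and Kirillov instead use the Yang--Baxter relation in the nilCoxeter algebra.

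The genuine gap is that your inductive step is a statement of the problem, not a proof: the identity $\partial_i\bigl(\sum_{P\in\PD_0(w)}\xx^P\bigr)=\sum_{P'\in\PD_0(ws_i)}\xx^{P'}$ \emph{is} the theorem, modulo the base case, and your sketch supplies no mechanism for it. Worse, the one concrete identity you write to support the ``telescoping'' is false: the sum $x_i^a+x_i^{a-1}x_{i+1}+\dots+x_{i+1}^a$ is symmetric in $x_i,x_{i+1}$, so $\partial_i$ annihilates it rather than producing $x_i^{a-1}+\dots+x_{i+1}^{a-1}$; the correct building block is $\partial_i(x_i^a)=x_i^{a-1}+x_i^{a-2}x_{i+1}+\dots+x_{i+1}^{a-1}$, and more generally $\partial_i(x_i^ax_{i+1}^b)$ with $a>b$ is a sum of $a-b$ monomials. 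This sign of confusion matters structurally: since $\partial_i$ sends one monomial to many and kills symmetric combinations, there can be no monomial-to-monomial bijection between $\PD_0(w)$ and $\PD_0(ws_i)$; one pipe dream of shape $w$ must spawn several of shape $ws_i$, and large families of terms must cancel. Organizing this --- defining the actual operation on crosses in rows $i$ and $i+1$, proving it preserves reducedness and shape, and showing it covers $\PD_0(ws_i)$ exactly once while an involution accounts for the cancellations --- is precisely the content of the mitosis theorem (or, on your alternative route, of the compatible-sequence recursion), and none of it appears in the proposal. As written, you have an accurate plan with the hard step correctly located but entirely open.
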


\begin{example}\label{ex:1432}
	Consider the permutation $w=\overline{1432}$.  There are five reduced pipe dreams of 
	shape~$w$:
\[
	\begingroup
	\setlength\arraycolsep{0pt}
	\renewcommand{\arraystretch}{0.06}
	\begin{matrix}
	& 1 & 2 & 3 & 4  \\
	1 & \elbows & \cross & \cross & \elbow & \\
	2 & \elbows & \cross & \elbow & \\
	3 & \elbows & \elbow & \\
	4 & \elbow & \\
	\end{matrix}\quad
	\begin{matrix}
	& 1 & 2 & 3 & 4  \\
	1 & \elbows & \cross & \cross & \elbow & \\
	2 & \elbows & \elbows & \elbow & \\
	3 & \cross & \elbow & \\
	4 & \elbow & \\
	\end{matrix}
	\quad\begin{matrix}
	& 1 & 2 & 3 & 4  \\
	1 & \elbows & \cross & \elbows & \elbow & \\
	2 & \cross & \cross & \elbow & \\
	3 & \elbows & \elbow & \\
	4 & \elbow & \\
	\end{matrix}
	\quad\begin{matrix}
	& 1 & 2 & 3 & 4  \\
	1 & \elbows & \elbows & \cross & \elbow & \\
	2 & \cross & \elbows & \elbow & \\
	3 & \cross & \elbow & \\
	4 & \elbow & \\
	\end{matrix}
	\quad\begin{matrix}
	& 1 & 2 & 3 & 4  \\
	1 & \elbows & \elbows & \elbows & \elbow & \\
	2 & \cross & \cross & \elbow & \\
	3 & \cross & \elbow & \\
	4 & \elbow & \\
	\end{matrix}
	\quad
	\endgroup
\]
Hence the Schubert polynomial for $w$ looks as follows:
\[
\mathfrak S_{\overline{1432}}(x_1,x_2,x_3)=x_1^2x_2+x_1^2x_3+x_1x_2^2+x_1x_2x_3+x_2^2x_3.
\]
\end{example}

This theorem has the following modification for Grothendieck polynomials.

\begin{definition}
Denote by $\mathrm{ex}(P)$ the \emph{excess} of $P$, i.e. the number of ``redundant'' crosses in a (non-reduced) pipe dream $P$. Namely, set $\mathrm{ex}(P)=\#\left(D_P\smallsetminus D_{\mathrm{reduct}(P)}\right)$.
\end{definition}
\begin{theorem}\cite{FominKirillov93,FominKirillov94}\label{thm:gFK}
The Grothendieck polynomials satisfy the following identity:
\[
\Gf^{(\beta)}_w=\sum_{P\in\PD(w)}\beta^{\mathrm{ex}(P)}\xx^P.
\]
\end{theorem}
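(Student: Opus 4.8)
\textbf{Proof proposal for Theorem~\ref{thm:gFK}.}

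The plan is to deduce the combinatorial formula for Grothendieck polynomials from the recursive definition, exactly as one does for Schubert polynomials, by checking that the right-hand side satisfies the same initial condition and the same isobaric divided-difference recurrence. First I would verify the base case: when $w=w_0$, the only pipe dream in $\PD(w_0)$ is the ``top-justified'' one whose crosses fill the entire region strictly above the antidiagonal, so $\PD(w_0)=\PD_0(w_0)$ is a single reduced pipe dream with $\mathrm{ex}=0$, and the sum $\sum_{P\in\PD(w_0)}\beta^{\mathrm{ex}(P)}\xx^P$ collapses to the monomial $x_1^{n-1}x_2^{n-2}\cdots x_{n-1}$, matching $\Gf^{(\beta)}_{w_0}$.

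Next I would set $G_w:=\sum_{P\in\PD(w)}\beta^{\mathrm{ex}(P)}\xx^P$ and show that these polynomials obey $\pi^{(\beta)}_i G_w = G_{ws_i}$ whenever $\ell(ws_i)<\ell(w)$, which by the uniqueness of solutions to the recurrence forces $G_w=\Gf^{(\beta)}_w$. The heart of the argument is a combinatorial partition of the pipe dreams according to what happens in the first column (or more precisely in the relevant pair of adjacent rows/columns governing $s_i$). Applying $\pi^{(\beta)}_i$ to a monomial $\xx^P$ produces a controlled sum of monomials, and the key is to match these output monomials bijectively against the pipe dreams of shape $ws_i$, tracking both the exponent of $\beta$ and the powers of the $x$-variables. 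Concretely, one groups the pipe dreams of shape $w$ into ``ladders'' or chains on which $\pi^{(\beta)}_i$ acts transparently, so that the isobaric operator's numerator $(1+\beta x_{i+1})f-(1+\beta x_i)s_i\circ f$ telescopes after division by $x_i-x_{i+1}$; the crucial point is that the $\beta$-linear correction term accounts precisely for the creation of one extra (redundant) cross, raising $\mathrm{ex}$ by one.

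The main obstacle will be the careful bookkeeping of this bijection between the monomials produced by $\pi^{(\beta)}_i$ and the pipe dreams of shape $ws_i$, in particular showing that a non-reduced pipe dream of shape $ws_i$ with excess $e$ is correctly produced with coefficient $\beta^e$ and the right monomial. This requires identifying how adding the factor $(1+\beta x_i)$ or $(1+\beta x_{i+1})$ corresponds geometrically to inserting a cross that may or may not be redundant, and verifying that the telescoping leaves exactly the pipe dreams of the lower shape. Rather than grinding through this case analysis here, I would note that this is the $K$-theoretic refinement of the Billey--Bergeron--Fomin--Kirillov argument of Theorem~\ref{thm:FK}: setting $\beta=0$ kills all non-reduced contributions and recovers that theorem, which serves as a consistency check on the whole computation. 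An alternative, cleaner route would invoke the transition/Monk-type recurrence for Grothendieck polynomials together with the corresponding ``chute'' and ``ladder'' moves on pipe dreams due to Bergeron--Billey, whose $K$-theoretic analogues are established in \cite{FominKirillov94}, thereby reducing the statement to a known correspondence rather than a direct operator computation.
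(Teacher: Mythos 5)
The paper does not prove Theorem~\ref{thm:gFK} at all: it is quoted with a citation to \cite{FominKirillov93,FominKirillov94}, so your proposal must be judged as a self-contained argument, and as such it has a genuine gap. Your base case is fine and complete: the staircase region has exactly $\binom{n}{2}$ cells, so the unique pipe dream of shape $w_0$ is the fully crossed one, it is reduced, and the sum collapses to $x_1^{n-1}x_2^{n-2}\cdots x_{n-1}$. But the entire content of the theorem is the step you defer, namely that $G_w:=\sum_{P\in\PD(w)}\beta^{\mathrm{ex}(P)}\xx^P$ satisfies $\pi^{(\beta)}_i G_w=G_{ws_i}$ when $\ell(ws_i)<\ell(w)$. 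This is not a bookkeeping exercise that can be waved through with ``ladders that telescope'': the operator $\pi^{(\beta)}_i$ does not act monomial-by-monomial, the set $\PD(w)$ is not visibly closed under any natural pairing into $s_i$-chains, and already in the $\beta=0$ case the analogous verification is the hard part of the Billey--Bergeron proof of Theorem~\ref{thm:FK} (via chute/ladder moves and induction), with the $K$-theoretic refinement strictly harder (this is essentially the content of $K$-theoretic mitosis). Asserting that ``the $\beta$-linear correction term accounts precisely for the creation of one extra cross'' is a restatement of what must be proved, not a proof; and the consistency check at $\beta=0$ only recovers Theorem~\ref{thm:FK}, which you have not proved either, so it confirms nothing.

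Your fallback route is also problematic: you propose to invoke $K$-theoretic transition/Monk-type recurrences ``established in \cite{FominKirillov94}'' --- but that is the very paper whose theorem you are trying to prove, so this is circular as a logical reduction. It is also worth noting that the actual Fomin--Kirillov argument takes a different shape from your sketch: it proceeds through the Yang--Baxter equation, expressing the Grothendieck polynomial as a product of $R$-matrix-type factors $(1+\beta x_i + x_i e_j)$ in a degenerate Hecke algebra, from which the pipe-dream expansion falls out by expanding the product; the divided-difference compatibility is then a structural consequence rather than a cross-by-cross bijection. So while your overall strategy (base case plus recurrence plus uniqueness) is a legitimate skeleton, the proposal as written omits the one lemma that carries all the difficulty, and the suggested ways to fill it are either unsubstantiated or circular.
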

\begin{example}
To continue Example~\ref{ex:1432}, let us compute the Grothendieck polynomial for $w=\overline{1432}$. The set $\PD(w)$ consists of 11 pipe dreams, 5 of them being reduced and 6 non-reduced.
\[
	\begingroup
	\setlength\arraycolsep{0pt}
	\renewcommand{\arraystretch}{0.07}
	\begin{matrix}
& 1 & 2 & 3 & 4  \\
1 & \elbows & \cross & \cross & \elbow & \\
2 & \elbows & \cross & \elbow & \\
3 & \elbows & \elbow & \\
4 & \elbow & \\
\end{matrix}
\quad	
	\begin{matrix}
	& 1 & 2 & 3 & 4  \\
	1 & \elbows & \cross & \cross & \elbow & \\
	2 & \elbows & \elbows & \elbow & \\
	3 & \cross & \elbow & \\
	4 & \elbow & \\
	\end{matrix}
	\quad
	\begin{matrix}
	& 1 & 2 & 3 & 4  \\
	1 & \elbows & \cross & \elbows & \elbow & \\
	2 & \cross & \cross & \elbow & \\
	3 & \elbows & \elbow & \\
	4 & \elbow & \\
	\end{matrix}
	\quad
	\begin{matrix}
	& 1 & 2 & 3 & 4  \\
	1 & \elbows & \elbows & \cross & \elbow & \\
	2 & \cross & \elbows & \elbow & \\
	3 & \cross & \elbow & \\
	4 & \elbow & \\
	\end{matrix}
	\quad
	\begin{matrix}
	& 1 & 2 & 3 & 4  \\
	1 & \elbows & \elbows & \elbows & \elbow & \\
	2 & \cross & \cross & \elbow & \\
	3 & \cross & \elbow & \\
	4 & \elbow & \\
	\end{matrix}
	\quad
	\endgroup
\]
	
\[
	\begingroup
	\setlength\arraycolsep{0pt}
	\renewcommand{\arraystretch}{0.07}
		\begin{matrix}
	& 1 & 2 & 3 & 4  \\
	1 & \elbows & \cross & \cross & \elbow & \\
	2 & \cross & \cross & \elbow & \\
	3 & \elbows & \elbow & \\
	4 & \elbow & \\
	\end{matrix}
	\quad
		\begin{matrix}
	& 1 & 2 & 3 & 4  \\
	1 & \elbows & \cross & \cross & \elbow & \\
	2 & \cross & \elbows & \elbow & \\
	3 & \cross & \elbow & \\
	4 & \elbow & \\
	\end{matrix}
	\quad
	\begin{matrix}
	& 1 & 2 & 3 & 4  \\
	1 & \elbows & \cross & \cross & \elbow & \\
	2 & \elbows & \cross & \elbow & \\
	3 & \cross & \elbow & \\
	4 & \elbow & \\
	\end{matrix}
	\quad
	\begin{matrix}
	& 1 & 2 & 3 & 4  \\
	1 & \elbows & \cross & \elbows & \elbow & \\
	2 & \cross & \cross & \elbow & \\
	3 & \cross & \elbow & \\
	4 & \elbow & \\
	\end{matrix}
	\quad
	\begin{matrix}
	& 1 & 2 & 3 & 4  \\
	1 & \elbows & \elbows & \cross & \elbow & \\
	2 & \cross & \cross & \elbow & \\
	3 & \cross & \elbow & \\
	4 & \elbow & \\
	\end{matrix}
	\quad
	\endgroup
\]
\[
	\begingroup
	\setlength\arraycolsep{0pt}
	\renewcommand{\arraystretch}{0.07}
	\begin{matrix}
	& 1 & 2 & 3 & 4  \\
	1 & \elbows & \cross & \cross & \elbow & \\
	2 & \cross & \cross & \elbow & \\
	3 & \cross & \elbow & \\
	4 & \elbow & \\
	\end{matrix}
	\quad
	\endgroup
\]
The corresponding Grothendieck polynomial is equal to
\[
\Gf^{(\beta)}_{(1432)} =x_{1}^{2}x_{2}+x_{1}^{2}x_{3}+x_{1}x_{2}^{2}+x_{1}x_{2}x_{3}+x_{2}^{2}x_{3}+ \beta x_{1}^{2}x_{2}^{2}+2\beta x_{1}^{2}x_{2}x_{3}+2\beta x_{1}x_{2}^{2}x_{3}+\beta^{2}x_{1}^{2}x_{2}^{2}x_{3}.
\]
\end{example}
Since $\mathrm{ex}(P)=0$ if and only if $P$ is reduced, we have $\Gf_w^{(\beta)}=\Sf_w+\beta(\ldots)$. This implies the equality $\Gf_w^{(0)}=\Sf_w$ we mentioned above.

\subsection{Slide and glide polynomials}\label{ssec:slidepoly}

S.\,Assaf and D.\,Searles~ \cite{AssafSearles17}  introduced another basis in the ring of polynomials: the \emph{slide polynomials}. One of their main features is that each Schubert polynomial can be represented as a sum of slide polynomials with the coefficients 0 or 1. The subsequent paper by O.\,Pechenik and D.\,Searles~\cite{PechenikSearles19} provides a similar construction for Grothendieck polynomials. Let us recall these constructions here.

\begin{definition}
Let $P$ be a (possibly non-reduced) pipe dream. Denote a \emph{slide move} $S_i$ as follows. Suppose that the leftmost  cross in the $i$-th row of $P$ is located strictly to the right of the rightmost cross in the $(i+1)$-st row (in particular, the row $i+1$ can contain only elbows). In this case, the leftmost cross in the $i$-th row can be shifted one step southwest: $\begingroup
	\setlength\arraycolsep{0pt}
	\renewcommand{\arraystretch}{0.07}
	\begin{matrix}
	\elbows & \cross\\
	\elbows & \elbows
	\end{matrix} \mapsto
	\begin{matrix}
	\elbows & \elbows\\
	\cross & \elbows
	\end{matrix}
	\endgroup $. If the initial pipe dream was non-reduced, this move can look as follows: $\begingroup
	\setlength\arraycolsep{0pt}
	\renewcommand{\arraystretch}{0.07}
	\begin{matrix}
	\elbows & \cross\\
	\cross & \elbows
	\end{matrix} \mapsto
	\begin{matrix}
	\elbows & \elbows\\
	\cross & \elbows
	\end{matrix}
	\endgroup $. If the leftmost cross in the $i$-th row is either in the first column or weakly left of a cross in the $(i+1)$-st row, we will say that $S_i$ acts on $P$ identically.
	\end{definition}

Note that a slide move preserves the shape of a pipe dream. Indeed,  $\mathrm{reduct}(S_i(P))$ and  $\mathrm{reduct}(P)$ either coincide or are obtained one from the other by a shape-preserving move of one cross: $\begingroup
\setlength\arraycolsep{0pt}
\renewcommand{\arraystretch}{0.07}
\begin{matrix}
	\elbows & \cross\\
	\elbows & \elbows
\end{matrix} \mapsto
\begin{matrix}
	\elbows & \elbows\\
	\cross & \elbows
\end{matrix}
\endgroup $.
Moreover, the number of crosses in a reduced pipe dream is preserved by a slide move. This means that a slide move sends a reduced pipe dreams to a reduced one.

\begin{definition}\label{def:quasiyamanouchi} If all slide moves act on $P$ identically: i.e., for each $i$ we either have the $i$-th row starting with a cross, or the leftmost cross in the $i$-th row is located weakly left of a cross from the $(i+1)$-st row, then $P$ is said to be \emph{quasi-Yamanouchi}.

Denote the set of all quasi-Yamanouchi pipe dreams of shape  $w$ by $\QPD(w)\subset\PD(w)$, and the subset of all reduced quasi-Yamanouchi pipe dreams by $\QPD_0(w)=\PD_0(w)\cap\QPD(w)$.
\end{definition}

\begin{definition}\label{def:destand}
The \emph{destandardization} operations $\dst\colon\PD(w)\to\QPD(w)$ and $\dst_0\colon\PD_0(w)\to\QPD_0(w)$ are defined as repeated applications of slide moves to a pipe dream until it becomes quasi-Yamanouchi.
\end{definition}

In~\cite[Lemma~3.12]{AssafSearles17} it is shown that every pipe dream can be sent into a quasi-Yamanouchi one by repeated applications of slide moves, and that the resulting quasi-Yamanouchi pipe dream does not depend on the order of slide moves and hence is well-defined. Both operations $\dst\colon\PD(w)\to\QPD(w)$ and $\dst_0\colon\PD_0(w)\to\QPD_0(w)$ are obviously surjective, since they are projectors to the sets of quasi-Yamanouchi and reduced quasi-Yamanouchi pipe dreams respectively. 

\begin{definition}
Let $Q\in\QPD_0(w)$ be a reduced quasi-Yamanouchi pipe dream. The set $\dst_0^{-1}(Q)$ is called the \emph{slide orbit} of $Q$. If  $Q\in\QPD(w)$ is a not necessarily reduced quasi-Yamanouchi pipe dream, then $\dst^{-1}(Q)$ is called the \emph{glide orbit} of $Q$.
\end{definition}

\begin{definition}
For $Q\in\QPD_0(w)$, the \emph{slide polynomial} $\Ff_Q$ is defined as the sum of monomials over the corresponding slide orbit of pipe dreams:
\[
\Ff_Q=\sum_{P\in\dst_0^{-1}(Q)}\mathbf x^P.
\]
For $Q\in\QPD(w)$, the \emph{glide polynomial} $\Gc^{(\beta)}_Q$ is defined as the sum of monomials over the glide orbit of pipe dreams:
	$$\Gc^{(\beta)}_Q=\sum_{P\in\dst^{-1}(Q)}\beta^{\mathrm{ex}(P)-\mathrm{ex}(Q)}\mathbf x^P.$$
\end{definition}

This definition together with Theorems~\ref{thm:FK} and~\ref{thm:gFK} implies that
\[
\Sf_w=\sum_{Q\in\QPD_0(w)} \Ff_Q\qquad\text{and}\qquad\Gf^{(\beta)}_w=\sum_{Q\in\QPD(w)}\beta^{\mathrm{ex}(Q)}\Gc^{(\beta)}_Q.
\]

\begin{example}
There are five quasi-Yamanouchi pipe dreams of shape $w=\overline{1432}$. This means that  $\PD(w)$ splits into 5 glide orbits. One of them consists of seven pipe dreams (the quasi-Yamanouchi pipe dream is shown in parentheses):
\[
	\begingroup
	\setlength\arraycolsep{0pt}
	\renewcommand{\arraystretch}{0.07}
	\begin{pmatrix}
		& 1 & 2 & 3 & 4  \\
		1 & \elbows & \elbows & \elbows & \elbow & \\
		2 & \cross & \cross & \elbow & \\
		3 & \cross & \elbow & \\
		4 & \elbow & \\
	\end{pmatrix}
	\quad
	\begin{matrix}
		& 1 & 2 & 3 & 4  \\
		1 & \elbows & \elbows & \cross & \elbow & \\
		2 & \cross & \elbows & \elbow & \\
		3 & \cross & \elbow & \\
		4 & \elbow & \\
	\end{matrix}
	\quad
	\begin{matrix}
		& 1 & 2 & 3 & 4  \\
		1 & \elbows & \elbows & \cross & \elbow & \\
		2 & \cross & \cross & \elbow & \\
		3 & \cross & \elbow & \\
		4 & \elbow & \\
	\end{matrix}
	\quad
	\begin{matrix}
		& 1 & 2 & 3 & 4  \\
		1 & \elbows & \cross & \cross & \elbow & \\
		2 & \elbows & \elbows & \elbow & \\
		3 & \cross & \elbow & \\
		4 & \elbow & \\
	\end{matrix}
	\endgroup
\]
\[
	\begingroup
	\setlength\arraycolsep{0pt}
	\renewcommand{\arraystretch}{0.1}
	\begin{matrix}
		& 1 & 2 & 3 & 4  \\
		1 & \elbows & \cross & \cross & \elbow & \\
		2 & \elbows & \cross & \elbow & \\
		3 & \elbows & \elbow & \\
		4 & \elbow & \\
	\end{matrix}
	\quad
	\begin{matrix}
		& 1 & 2 & 3 & 4  \\
		1 & \elbows & \cross & \cross & \elbow & \\
		2 & \cross & \elbows & \elbow & \\
		3 & \cross & \elbow & \\
		4 & \elbow & \\
	\end{matrix}
	\quad
	\begin{matrix}
		& 1 & 2 & 3 & 4  \\
		1 & \elbows & \cross & \cross & \elbow & \\
		2 & \elbows & \cross & \elbow & \\
		3 & \cross & \elbow & \\
		4 & \elbow & \\
	\end{matrix}
	\quad
	\endgroup
\]
Consequently, the corresponding glide polynomial equals
\[
\Gc^{(\beta)}_{s_3s_2s_3}= 2\beta x_{1}^{2}x_{2}x_{3}+\beta x_{1}x_{2}^{2}x_{3}+x_{1}^{2}x_{2}+x_{1}^{2}x_{3}+x_{1}x_{2}x_{3}+x_{2}^{2}x_{3}.
\]
Each of the remaining four glide orbits consists of one pipe dream:
\[
\begingroup
\setlength\arraycolsep{0pt}
\renewcommand{\arraystretch}{0.07}
\begin{pmatrix}
& 1 & 2 & 3 & 4  \\
1 & \elbows & \cross & \elbows & \elbow & \\
2 & \cross & \cross & \elbow & \\
3 & \cross & \elbow & \\
4 & \elbow & \\
\end{pmatrix}
\quad
\begin{pmatrix}
& 1 & 2 & 3 & 4  \\
1 & \elbows & \cross & \cross & \elbow & \\
2 & \cross & \cross & \elbow & \\
3 & \elbows & \elbow & \\
4 & \elbow & \\
\end{pmatrix}
\quad
\begin{pmatrix}
& 1 & 2 & 3 & 4  \\
1 & \elbows & \cross & \cross & \elbow & \\
2 & \cross & \cross & \elbow & \\
3 & \cross & \elbow & \\
4 & \elbow & \\
\end{pmatrix}
\quad
\begin{pmatrix}
& 1 & 2 & 3 & 4  \\
1 & \elbows & \cross & \elbows & \elbow & \\
2 & \cross & \cross & \elbow & \\
3 & \elbows & \elbow & \\
4 & \elbow & \\
\end{pmatrix}
\quad
\endgroup
\]
Hence each of the corresponding glide polynomials has only one term:
\[
\Gc^{(\beta)}_{s_2s_3s_2s_3}=x_1x_2^2x_3;\qquad\Gc^{(\beta)}_{s_3s_2s_3s_2}=x_1^2x_2^2;\qquad\Gc^{(\beta)}_{s_3s_2s_3s_2s_3}=x_1^2x_2^2x_3;\qquad\Gc^{(\beta)}_{s_2s_3s_2}=x_1x_2^2.
\]

\end{example}

\section{Subword complexes and pipe dream  complexes}\label{sec:subword}

\subsection{Subword complexes}\label{ssec:subword}

Consider an arbitrary Coxeter system $(\Pi, \Sigma)$, where $\Pi$ is a Coxeter group, and $\Sigma$ a system of simple reflections minimally generating $\Pi$. We will be particularly interested in the situation where $\Pi=\Sc_n$ is a symmetric group and $\Sigma=\{s_1,\dots,s_{n-1}\}$ is the set of simple transpositions.

\begin{definition}
	A \emph{word} of length $m$ is a sequence $\cQ=(\sigma_1, \ldots, \sigma_m)$ of simple reflections. A subsequence $\cP=(\sigma_{i_1},\sigma_{i_2}, \ldots, \sigma_{i_k})$, where $1\leqslant i_1<i_2<\ldots<i_k\leqslant m$, is a \emph{subword} of $\cQ$.
	
	We say that $\cP$ \emph{represents} $\pi \in \Pi$ if   $\sigma_{i_1} \sigma_{i_2} \ldots \sigma_{i_k}$ is a reduced decomposition of $\pi$. If some subword of $\cP$ represents $\pi$, we will say that  $\cP$ \emph{contains} $\pi$.

	The \emph{subword complex} $\Delta(\cQ, \pi)$ is a set of nonempty subwords $\cQ\smallsetminus \cP$ whose complements $\cP$ contain $\pi$.  It is a simplicial complex; one of its faces belongs to the boundary of another if and only if the first of the corresponding subwords is a subset of the second one.
\end{definition}

All reduced subwords for $\pi \in \Pi$ have the same length. So the complex $\Delta(\cQ, \pi)$ is pure of dimension $m-\ell(\pi)$. The words $\cQ \smallsetminus \cP$ such that $\cP$ represents $\pi$ are its facets.

\begin{remark} To distinguish between words in an arbitrary Coxeter system and pipe dreams, we use the calligraphic font, such as in $\cP$, $\cQ$ for words and the regular font, such as in $P$, $Q$, for pipe dreams.
\end{remark}

\begin{example} 
	Let $\Pi=\Sc_4, \pi=\overline{1432}, \cQ=s_3s_2s_1s_3s_2s_3$. The permutation $\pi$ has two reduced decompositions $s_2s_3s_2$ and $s_3s_2s_3$. Let us label the center of a pentagon  with $s_1$ and its vertices with reflections $s_3,s_2,s_3,s_2,s_3$ in the cyclic order. Then the facets of $\Delta(\cQ, \pi)$ are the triples that consist of two adjacent vertices and the center of the pentagon.
	\begin{figure}[ht]
	\centering
	\includegraphics[width=5cm]{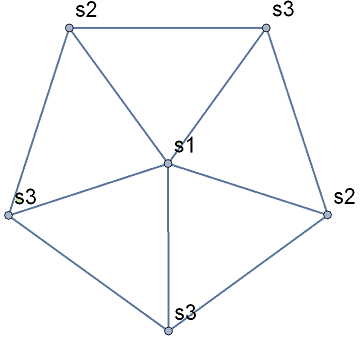}
	\caption{\label{complex-example} Subword complex $\Delta(s_3s_2s_1s_3s_2s_3, s_2s_3s_2)$.}
	\end{figure}
\end{example}	

\begin{definition}
	Let $\Delta$ be a simplicial complex and $F\in\Delta$ a face. The \emph{deletion} of $F$ from $\Delta$ is the complex 
\[
\del(F, \Delta)=\{G \in \Delta \mid G \cap F = \varnothing\}.
\]
	The \emph{link} of $F$ in $\Delta$ is the complex
\[
\link(F, \Delta)=\{G \in \Delta \mid G \cap F = \varnothing, G \cup F \in \Delta\}.
\]
\end{definition}

\begin{definition} 
An $n$-dimensional complex $\Delta$ is said to be \emph{vertex decomposable} if it is pure and satisfies one of the following properties:
	\begin{itemize}
		\item $\Delta$ is an $n$-dimensional simplex; or
		\item there exists a vertex $v \in \Delta$ such that  $\del(v, \Delta)$ is a vertex decomposable $n$-dimensional complex, while $\link(v, \Delta)$ is an $(n-1)$-dimensional vertex decomposable complex.
	\end{itemize}
\end{definition}	

\begin{definition}\label{def:shelling}
	A \emph{shelling} of a simplicial complex  $\Delta$ is a total order on the set of its facets with the following property: for each  $i,j$, such that  $1 \leqslant i < j \leqslant t$, there exist $k$, where $1 \leqslant k < j$, and a vertex $v \in F_j$ such that $F_i \cap F_j \subseteq F_k \cap F_j = F_j \smallsetminus \{v\}$.
	
A complex admitting a shelling is said to be \emph{shellable}.
\end{definition}

The definition of shelling can be restated in the following way: for each $j$ such that $2\leqslant j \leqslant t$, the complex  $(\bigcup\limits_{i<j}F_i)\cap F_j$ is pure of dimension $\dim F_j-1$.

The notion of vertex decomposability was introduced in~\cite{vertex-dec}; in the same paper it is shown that it implies the shellability.
\begin{proposition}[\cite{vertex-dec}]
Vertex decomposable complexes are shellable. 
\end{proposition}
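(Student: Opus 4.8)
The plan is to induct on the number of vertices of $\Delta$. The base case is when $\Delta$ is an $n$-dimensional simplex: it has a single facet, so the shelling condition is vacuous and any order is a shelling. For the inductive step I would fix a vertex $v$ as in the definition of vertex decomposability, so that $\del(v,\Delta)$ is a vertex decomposable complex of dimension $n$ and $\link(v,\Delta)$ is a vertex decomposable complex of dimension $n-1$. Both have strictly fewer vertices than $\Delta$, so the inductive hypothesis furnishes shellings $F_1,\dots,F_p$ of $\del(v,\Delta)$ and $G_1,\dots,G_q$ of $\link(v,\Delta)$.

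Next I would record how the facets of $\Delta$ split. Because $\Delta$ is pure of dimension $n$, its facets not containing $v$ are exactly the facets $F_1,\dots,F_p$ of $\del(v,\Delta)$, while its facets containing $v$ are exactly the sets $\{v\}\cup G_j$, where $G_1,\dots,G_q$ run over the facets of $\link(v,\Delta)$. I would then propose the concatenated order
\[
F_1,\ \dots,\ F_p,\ \{v\}\cup G_1,\ \dots,\ \{v\}\cup G_q
\]
and verify the restated shelling condition: for every facet $E$ in this list, its intersection with the union of the earlier facets is pure of dimension $n-1$.

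For a facet $F_j$ from the first block this is immediate, since the earlier facets are exactly $F_1,\dots,F_{j-1}$ and the condition is inherited verbatim from the shelling of $\del(v,\Delta)$. The decisive computation is for a facet $\{v\}\cup G_j$ from the second block, and here I would separate the contribution of the first block from that of the earlier second-block facets. Since $G_j\in\link(v,\Delta)$ is a face of $\del(v,\Delta)$, it is covered by $F_1,\dots,F_p$, so the intersection of $\{v\}\cup G_j$ with the whole first block is exactly $G_j$, a codimension-one face. The intersection with the earlier facets $\{v\}\cup G_i$ (for $i<j$) equals $\{v\}\cup\big((\bigcup_{i<j}G_i)\cap G_j\big)$; by the shelling of $\link(v,\Delta)$ the set $(\bigcup_{i<j}G_i)\cap G_j$ is pure of dimension $n-2$, hence a union of codimension-one faces $G_j\smallsetminus\{g\}$ of $G_j$, and joining each with $v$ yields codimension-one faces $\{v\}\cup(G_j\smallsetminus\{g\})$ of $\{v\}\cup G_j$. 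Thus the full "old part" of $\{v\}\cup G_j$ is the union of $G_j$ with these faces $\{v\}\cup(G_j\smallsetminus\{g\})$, all of dimension $n-1$, so it is pure of dimension $n-1$, as required.

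The step I expect to demand the most care is precisely this second-block verification: one must check both that $G_j$ appears in full, which is where purity of $\del(v,\Delta)$ and the containment $G_j\subseteq\bigcup_i F_i$ are used, and that joining $v$ to the codimension-one "old part" produced by the link shelling again lands inside the boundary of the simplex $\{v\}\cup G_j$ at the correct codimension. Once these two facts are combined, purity of the old part is clear, the proposed order is a shelling of $\Delta$, and the induction closes.
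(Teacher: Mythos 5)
Your proof is correct. The paper offers no proof of this proposition---it is quoted with a citation to Billera--Provan---and your argument is exactly the classical one from that reference: induct on the number of vertices, shell $\del(v,\Delta)$ first and then the cones $\{v\}\cup G_j$ over a shelling of $\link(v,\Delta)$, where the two decisive checks (that all of $G_j$ already lies in the first block, which uses purity of the full-dimensional complex $\del(v,\Delta)$ as stipulated in the definition of vertex decomposability, and that coning the pure $(n-2)$-dimensional old part of the link shelling with $v$ yields codimension-one faces of $\{v\}\cup G_j$) are handled correctly.
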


The following statement was proved in~\cite[Thm~2.5]{KnutsonMiller04}.

\begin{theorem}\label{thm:subword-shellable} 
Subword complexes  are vertex decomposable, and hence shellable.
\end{theorem}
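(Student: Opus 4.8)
The plan is to establish the stronger property of vertex decomposability; shellability then follows at once from the preceding Proposition. I will argue by induction on the length $m$ of the word $\cQ=(\sigma_1,\dots,\sigma_m)$, peeling off the last letter $s:=\sigma_m$ and writing $\cQ'=(\sigma_1,\dots,\sigma_{m-1})$ for the truncated word. The whole point is to realise the deletion and the link of the vertex $v$ sitting at position $m$ as subword complexes built on the shorter word $\cQ'$, so that the inductive hypothesis applies to both; one must then check the dimension bookkeeping demanded by the definition of vertex decomposability (deletion of the same dimension, link of one dimension lower). The base case $m=\ell(\pi)$ is immediate: there $\Delta(\cQ,\pi)$ is a single simplex, which is vertex decomposable by definition.

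The link is the easy half. A face $F$ avoiding $v$ satisfies $F\cup\{v\}\in\Delta(\cQ,\pi)$ exactly when the complement of $F$ among the first $m-1$ positions already contains $\pi$; hence $\link(v,\Delta(\cQ,\pi))=\Delta(\cQ',\pi)$, a subword complex of dimension one less, vertex decomposable by induction. The deletion is the heart of the matter, and it divides according to whether $s$ is a right descent of $\pi$, i.e. whether $\ell(\pi s)<\ell(\pi)$ or $\ell(\pi s)>\ell(\pi)$ (one of the two always holds, since a simple reflection changes length by exactly one).

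Consider first the descent case $\ell(\pi s)<\ell(\pi)$. Here I claim $\del(v,\Delta(\cQ,\pi))=\Delta(\cQ',\pi s)$, which has the same dimension as $\Delta(\cQ,\pi)$ precisely because $\ell(\pi s)=\ell(\pi)-1$ compensates for the lost letter. The verification is where the Strong Exchange Property enters: appending the final $s$ to any reduced subword of $\cQ'$ representing $\pi s$ yields a reduced subword of $\cQ$ representing $\pi$, and conversely, given any reduced subword of $\cQ'$ representing $\pi$, the exchange property permits deleting exactly one of its letters to obtain a reduced subword representing $\pi s$. These two directions show that the complement of $F$ (with $v$ restored) contains $\pi$ if and only if the complement of $F$ inside $\cQ'$ contains $\pi s$, which is exactly the asserted equality of complexes. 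In the ascent case $\ell(\pi s)>\ell(\pi)$ no reduced subword for $\pi$ can end in $s$, so $v$ belongs to every facet: it is a cone apex and $\Delta(\cQ,\pi)=\{v\}\ast\Delta(\cQ',\pi)$.

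With these identifications the induction closes. In the descent case $\del(v,\cdot)=\Delta(\cQ',\pi s)$ and $\link(v,\cdot)=\Delta(\cQ',\pi)$ are vertex decomposable by the inductive hypothesis and of the correct dimensions, so $v$ is a legitimate decomposing vertex; the occasional degenerate configurations (where one of these complexes is void) simply collapse $\Delta(\cQ,\pi)$ onto a shorter subword complex to which induction applies directly. In the ascent case I invoke a short auxiliary lemma — that the cone over a vertex decomposable complex is again vertex decomposable, proved by a parallel induction using that deletion and link of a base vertex commute with coning — applied to the vertex decomposable base $\Delta(\cQ',\pi)$. The step I expect to be the main obstacle is the descent-case identity $\del(v,\Delta(\cQ,\pi))=\Delta(\cQ',\pi s)$: both inclusions genuinely require the Strong Exchange Property rather than naive word surgery, and one must keep the dimension count in lockstep so that the deletion stays full dimensional.
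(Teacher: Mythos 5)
Your proof is correct and is essentially the argument the paper invokes for this statement: the paper gives no proof of its own, citing \cite[Thm~2.5]{KnutsonMiller04}, and your induction is exactly that argument (and parallel to the paper's proof of Theorem~\ref{thm:shellable-slides}), merely mirrored --- you peel off the \emph{last} letter and use the exchange property to identify $\del(v,\Delta(\cQ,\pi))=\Delta(\cQ',\pi s)$ when $\ell(\pi s)<\ell(\pi)$, where Knutson--Miller peel off the \emph{first} letter and multiply by it on the left. One small point in your favor: your explicit treatment of the ascent/cone case via the lemma that a cone over a vertex decomposable complex is vertex decomposable is genuinely needed under the paper's definition (deleting a cone apex drops the dimension, so the apex is not a valid decomposing vertex), a subtlety the paper's proof of the slide-complex analogue passes over silently.
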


\begin{definition} The \emph{Demazure product} $\delta(\cQ)\in\Pi$ of a word $\cQ$ is defined by induction as follows: $\delta(\sigma)=\sigma$ for $\sigma\in \Sigma$, and 
\[
\delta(\cQ,\sigma)=\begin{cases}
\delta(\cQ)\sigma, &\text{if }\ell(\delta(\cQ)\sigma)>\ell(\delta(\cQ)),\\
\delta(\cQ), &\text{otherwise}.\end{cases}
\]
In other words, we multiply the elements in $\cQ$ from left to right, omitting the letters that decrease the length of the product obtained at each step. One can also think about the Demazure product as the product in the monoid generated by $\Sigma$ subject to the relations of the Coxeter group with the relation $s_i^2=e$ being replaced by $s_i^2=s_i$, cf.\,\cite[Def.\,3.1]{KnutsonMiller04}.
\end{definition}

Now let us recall the main results from~\cite{KnutsonMiller04}.
\begin{theorem}[{\cite[Thm~3.7]{KnutsonMiller04}}]\label{thm:ball-or-sphere}
The complex $\Delta(\cQ, \pi)$ is either a ball or a sphere. A face $\cQ\smallsetminus \cP$ is contained in its boundary iff $\delta(\cP)\ne \pi$.
\end{theorem}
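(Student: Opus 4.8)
The plan is to combine the shellability already established in Theorem~\ref{thm:subword-shellable} with the fact that $\Delta(\cQ,\pi)$ is a \emph{pseudomanifold}: I will show that every codimension-one face (ridge) is contained in at most two facets. A standard result on shellable complexes (see e.g.\ the theory of shellings of Björner--Wachs, or Danaraj--Klee) states that a shellable complex in which every ridge lies in at most two facets is homeomorphic to a ball or to a sphere, and that in the ball case its boundary is precisely the subcomplex generated by the ridges lying in exactly one facet. Granting this, the whole theorem reduces to two combinatorial tasks: (i) bounding, for each ridge, the number of facets above it; and (ii) translating the resulting boundary description into the Demazure-product condition.

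For the count, I first note that the facets of $\Delta(\cQ,\pi)$ containing a ridge $R=\cQ\smallsetminus\cP'$, where $\cP'$ contains $\pi$ and $|\cP'|=\ell(\pi)+1$, correspond exactly to the single-letter deletions of $\cP'$ that are reduced words for $\pi$; equivalently, they are the vertices of the $0$-dimensional subword complex $\Delta(\cP',\pi)$ (the link of $R$). So I must show that a word $\cP'$ of length $\ell(\pi)+1$ has at most two deletions representing $\pi$, and determine when there are two. I split on the Demazure product $\delta(\cP')$, which satisfies $\pi\le\delta(\cP')$ (since $\cP'$ contains $\pi$) and $\ell(\delta(\cP'))\le \ell(\pi)+1$. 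If $\delta(\cP')\ne\pi$, then necessarily $\ell(\delta(\cP'))=\ell(\pi)+1$, so $\cP'$ is reduced; the elements obtained by deleting each letter are $\delta(\cP')\,t_j$ for the pairwise distinct inversion-reflections $t_j$ of a reduced word, hence pairwise distinct, so exactly one of them equals $\pi$ and the ridge lies in a single facet. If $\delta(\cP')=\pi$, then $\cP'$ is non-reduced and, since the Demazure length increases by $0$ or $1$ at each letter and grows by $\ell(\pi)$ over $\ell(\pi)+1$ letters, there is a \emph{unique} position $p$ at which it fails to grow; deleting $\sigma_p$ yields one reduced word for $\pi$, and the strong exchange condition applied to the right descent of the reduced prefix $\sigma_1\cdots\sigma_{p-1}$ by $\sigma_p$ produces a second position $q<p$ whose deletion also gives $\pi$.

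The remaining point — and the part I expect to be the \textbf{main obstacle} — is to show that in the case $\delta(\cP')=\pi$ there are \emph{no further} valid deletions, so the count is exactly two (an interior ridge). Here I would argue that any deletion at a position $>p$ leaves the non-reduced prefix $\sigma_1\cdots\sigma_p$ intact, hence cannot yield a reduced word, while any deletion at a position $<p$ other than $q$ fails by the uniqueness clause of the exchange condition; this bookkeeping with descents and prefixes is the delicate step. Together with the reduced case this establishes the pseudomanifold property and the ridge-level boundary rule: $R\in\partial\Delta$ iff $\delta(\cP')\ne\pi$.

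Finally I would promote this to arbitrary faces $F=\cQ\smallsetminus\cP$ using monotonicity of the Demazure product under passage to subwords. A face lies in $\partial\Delta$ iff it is contained in some boundary ridge, i.e.\ iff there is $\cP'\subseteq\cP$ with $|\cP'|=\ell(\pi)+1$ containing a reduced word for $\pi$ and with $\delta(\cP')\ne\pi$. If $\delta(\cP)=\pi$, then $\pi\le\delta(\cP')\le\delta(\cP)=\pi$ forces $\delta(\cP')=\pi$ for every such $\cP'$, so $F$ is interior; conversely, if $\delta(\cP)\ne\pi$, I take a reduced subword of $\cP$ representing $\delta(\cP)$, locate inside it a reduced word for $\pi$ (Bruhat order being subword order), and enlarge it by one letter to produce a reduced $\cP'$ with $\delta(\cP')\ne\pi$. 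This yields $F\in\partial\Delta$ iff $\delta(\cP)\ne\pi$; specializing to the empty face ($\cP=\cQ$) shows that $\Delta(\cQ,\pi)$ is a sphere exactly when $\delta(\cQ)=\pi$ and a ball otherwise.
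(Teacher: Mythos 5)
The paper itself contains no proof of this statement: it is quoted directly from Knutson--Miller \cite[Thm~3.7]{KnutsonMiller04}, so the comparison must be with the original argument there --- and your proposal is essentially a reconstruction of that argument. Knutson and Miller likewise combine shellability (via vertex decomposability, Theorem~\ref{thm:subword-shellable}) with the Danaraj--Klee theorem that a shellable pseudomanifold is a ball or a sphere, verify the pseudomanifold property by exactly your case analysis on a ridge $\cQ\smallsetminus\cP'$ (if $\delta(\cP')\ne\pi$ then $\cP'$ is reduced and the one-letter deletions $\delta(\cP')\,t_j$ are pairwise distinct, giving a unique facet; if $\delta(\cP')=\pi$ then the unique position where the Demazure length fails to grow, together with the exchange condition and its uniqueness clause, gives exactly two), and convert the ridge-level boundary rule into the face-level condition $\delta(\cP)\ne\pi$ using monotonicity of the Demazure product under passage to subwords. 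The only spot where your sketch is looser than the source is the final enlargement step: extending an \emph{arbitrary} reduced word for $\pi$ inside a reduced word $\cT\subseteq\cP$ for $\delta(\cP)$ by one letter need not yield a reduced word (e.g.\ $s_1s_1$ inside $s_1s_2s_1$), so you should instead invoke the chain property of Bruhat order to pick $\pi'$ with $\pi\lessdot\pi'\le\delta(\cP)$ and take, by the subword property, a reduced subword $\cP'\subseteq\cT$ representing $\pi'$; then $|\cP'|=\ell(\pi)+1$, $\cP'$ contains $\pi$, and $\delta(\cP')=\pi'\ne\pi$, which supplies the boundary ridge you need, and with that standard fix your proof is complete.
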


\begin{corollary}[{\cite[Cor.\,3.8]{KnutsonMiller04}}]	  $\Delta(\cQ, \pi)$ is a sphere if $\delta(\cQ)=\pi$, and a ball otherwise.
\end{corollary}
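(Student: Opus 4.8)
The plan is to deduce everything from Theorem~\ref{thm:ball-or-sphere}, which already supplies the ball-or-sphere dichotomy together with a combinatorial description of the boundary. The only remaining task is to decide, for given $\cQ$ and $\pi$, which of the two alternatives occurs; the natural invariant distinguishing a ball from a sphere is its boundary, since a simplicial ball of dimension $d$ has as boundary a nonempty $(d-1)$-sphere, whereas a simplicial sphere has empty boundary. As these two cases are mutually exclusive, it suffices to detect whether the boundary of $\Delta(\cQ,\pi)$ is empty or not.

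First I would apply the boundary criterion of Theorem~\ref{thm:ball-or-sphere} to the empty face $\varnothing$. This face corresponds to the subword $\cQ\smallsetminus\cP$ with $\cP=\cQ$, i.e. to taking the whole word as the complement. It is indeed a face of $\Delta(\cQ,\pi)$: the complex is nonempty exactly when $\cQ$ contains $\pi$, and in that case $\cP=\cQ$ contains $\pi$ by definition, so $\varnothing\in\Delta(\cQ,\pi)$. Applying the criterion verbatim, the empty face lies in the boundary of $\Delta(\cQ,\pi)$ if and only if $\delta(\cP)=\delta(\cQ)\neq\pi$.

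Next I would translate this into the topological dichotomy. The empty face belongs to the boundary subcomplex precisely when that subcomplex is nonempty, since every nonempty simplicial complex contains $\varnothing$ as a face, whereas the void complex (the boundary of a sphere) contains no faces at all. Hence $\varnothing$ lies in the boundary of $\Delta(\cQ,\pi)$ exactly when the boundary is nonempty, i.e. exactly when $\Delta(\cQ,\pi)$ is a ball. Combining with the previous step: if $\delta(\cQ)=\pi$ then $\varnothing$ is not on the boundary, so the boundary is empty and the complex is a sphere; if $\delta(\cQ)\neq\pi$ then $\varnothing$ is on the boundary, so the boundary is nonempty and the complex is a ball. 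This is exactly the claim.

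The argument is short, so the only points requiring care — and hence the main, rather minor, obstacle — are bookkeeping ones: confirming that the empty face is a legitimate face under the paper's conventions, and invoking correctly the standard topological facts that a simplicial sphere has empty boundary while the boundary of a ball is a nonempty sphere containing the empty face. Once these are pinned down, no further computation with Demazure products is needed, since all the real work has been absorbed into the boundary criterion already established in Theorem~\ref{thm:ball-or-sphere}.
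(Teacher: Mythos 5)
Your argument is correct and is essentially the canonical one: the paper itself states this corollary without proof, citing \cite[Cor.~3.8]{KnutsonMiller04}, and the deduction there is exactly your observation that applying the boundary criterion of Theorem~\ref{thm:ball-or-sphere} to the empty face $\varnothing=\cQ\smallsetminus\cQ$ detects whether the boundary subcomplex is void. The convention point you flag is handled correctly as well: since every nonempty simplicial complex contains $\varnothing$ as a face, the boundary is nonvoid precisely when $\varnothing$ lies in it, which together with the ball-or-sphere dichotomy of Theorem~\ref{thm:ball-or-sphere} settles the claim.
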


\subsection{Pipe dream complexes}\label{ssec:pdc}

Pipe dreams are closely related to subword complexes of a certain form. Let $\Pi=\Sc_n$, and let us fix the following word for the longest permutation:
\[
\cQ_{0,n}=(s_{n-1} s_{n-2}\ldots s_3 s_2 s_1) (s_{n-1} s_{n-2} \ldots s_3 s_2) (s_{n-1} s_{n-2} \ldots s_3) \ldots( s_{n-1} s_{n-2}) (s_{n-1}).
\]
This word is obtained by reading the table
		$$
	\begin{matrix}
	s_1 & s_2 & s_3 &\dots & s_{n-2}& s_{n-1}&\\
	s_2 & s_3& \dots &s_{n-2}&s_{n-1}&\\
	s_3&  \dots & s_{n-2} &s_{n-1}\\
	\vdots &\vdots &\iddots\\
	s_{n-2} &s_{n-1}\\
	s_{n-1}
	\end{matrix}
	$$
from right to left, from top to bottom. Let $P\in\PD_0(w)$ be a reduced pipe dream of shape $w\in\Sc_n$. For each of the crosses occuring in $P$, let us take the simple transposition from the corresponding cell of the table. We obtain a subword $\word(P)$ in $\cQ_{0,n}$. It is clear that  $\word(P)$ represents the permutation  $w$. The converse is also true: if $\cT$ is a subword in $\cQ_{0,n}$ representing  $w$, then the pipe dream with crosses corresponding to the letters of $\cT$ is reduced and has the shape $w$.

We obtain a bijection betwen the elements of $\PD_0(w)$ and facets of the complex $\Delta(\cQ_{0,n},w)$.
\begin{definition}
The complex $\Delta(\cQ_{0,n},w)$ is called a \emph{pipe dream complex}.
\end{definition}
The reduction of a pipe dream corresponds naturally to the  Demazure product of the related subword: for each pipe dream $P$, we have $w(\mathrm{reduct}(P))=\delta(\mathrm{word}(P))$. Moreover, if $P$ is a nonreduced pipe dream of shape $w$, then $\mathrm{word}(P)$ contains  $\mathrm{word}(\mathrm{reduct}(P))$ as a subword and hence contains the permutation $w$.

Using these facts and Theorem~\ref{thm:ball-or-sphere}, we see that the pipe dreams from $\PD(w)$ bijectively correspond to the interior faces of the pipe dream complex $\Delta(\cQ_{0,n},w)$.

The following description of Grothendieck polynomials in terms of pipe dream complexes is essentially due to A.\,Knutson and E.\,Miller, cf.~\cite[Cor.~5.5]{KnutsonMiller04}. Sometimes, as in~\cite{EscobarMeszaros18}, it is used as an equivalent definition of Grothendieck polynomials.

\begin{corollary}\label{groth-sum}
The Grothendieck polynomial $\Gf_w^{\beta}$ is obtained as the sum of monomials corresponding to the interior faces of the corresponding pipe dream complex. Namely, for $w\in\Sc_n$, we have
 	$$\Gf_w^{(\beta)}=\sum_{P\in\mathrm{int}(\Delta(Q_{0,n}, w))} \beta^{\mathrm{codim}(P)}\xx^P$$
 	(By $\xx^P$ we denote the monomial of the pipe dream corresponding to a face  $P$).
\end{corollary}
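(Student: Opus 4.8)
The plan is to deduce the corollary directly from the pipe-dream formula for Grothendieck polynomials (Theorem~\ref{thm:gFK}) by transporting the sum through the bijection between $\PD(w)$ and the interior faces of $\Delta(\cQ_{0,n},w)$ that was established just above. Concretely, Theorem~\ref{thm:gFK} gives
\[
\Gf^{(\beta)}_w=\sum_{P\in\PD(w)}\beta^{\mathrm{ex}(P)}\xx^P,
\]
so it remains only to check that this bijection preserves the monomial $\xx^P$ and carries the excess $\mathrm{ex}(P)$ to the codimension of the corresponding face. Once these two matchings are verified, reindexing the displayed sum over interior faces yields the asserted formula.

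Recall the bijection: to $P$ we associate the subword $\word(P)\subseteq\cQ_{0,n}$ recording the cells containing crosses, and $P$ is sent to the face $F=\cQ_{0,n}\smallsetminus\word(P)$. By the discussion preceding the corollary, $\delta(\word(P))=w(\mathrm{reduct}(P))$, so $P$ has shape $w$ exactly when $\delta(\word(P))=w$; by Theorem~\ref{thm:ball-or-sphere} this is precisely the condition that $F$ be an interior (non-boundary) face of $\Delta(\cQ_{0,n},w)$. Thus $P\mapsto F$ maps $\PD(w)$ bijectively onto $\mathrm{int}(\Delta(\cQ_{0,n},w))$. The monomial attached to $F$ is, by definition, the monomial of the pipe dream recovered from its crosses $\cQ_{0,n}\smallsetminus F=\word(P)$, that is $\xx^P$ itself, so the monomial is preserved.

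The substantive step is matching the exponents of $\beta$. Since $\Delta(\cQ_{0,n},w)$ is pure, the codimension of a face equals the number of letters in a facet minus the number of letters in that face. A facet is the complement of a reduced word for $w$, hence has $\binom{n}{2}-\ell(w)$ letters, while $F=\cQ_{0,n}\smallsetminus\word(P)$ has $\binom{n}{2}-|D_P|$ letters. Therefore
\[
\codim(F)=\big(\tbinom{n}{2}-\ell(w)\big)-\big(\tbinom{n}{2}-|D_P|\big)=|D_P|-\ell(w).
\]
As $\mathrm{reduct}(P)$ is a reduced pipe dream of shape $w$, it has exactly $\ell(w)=|D_{\mathrm{reduct}(P)}|$ crosses, and so
\[
\codim(F)=|D_P|-|D_{\mathrm{reduct}(P)}|=\#\big(D_P\smallsetminus D_{\mathrm{reduct}(P)}\big)=\mathrm{ex}(P),
\]
which is exactly what we need. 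Substituting $\mathrm{ex}(P)=\codim(F)$ and $\xx^P=\xx^F$ into the formula of Theorem~\ref{thm:gFK} completes the argument.

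The proof is essentially bookkeeping, resting on the two inputs already in hand: the characterization of interior faces from Theorem~\ref{thm:ball-or-sphere} and the pipe-dream expansion from Theorem~\ref{thm:gFK}. The only place demanding care is the codimension count; phrasing it through cardinalities of faces rather than absolute dimensions keeps the off-by-one from the simplicial dimension convention from intruding and makes the equality $\codim(F)=\mathrm{ex}(P)$ transparent. I expect no real obstacle beyond this.
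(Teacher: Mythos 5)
Your proof is correct and follows essentially the route the paper intends: the paper leaves this corollary as an immediate consequence of Theorem~\ref{thm:gFK} together with the bijection (established just before the statement, via Theorem~\ref{thm:ball-or-sphere} and the identity $\delta(\word(P))=w(\mathrm{reduct}(P))$) between $\PD(w)$ and the interior faces of $\Delta(\cQ_{0,n},w)$, citing Knutson--Miller for the result. Your explicit verification that $\codim(F)=|D_P|-\ell(w)=\mathrm{ex}(P)$, done through cardinalities to dodge the simplicial-dimension off-by-one, is exactly the bookkeeping the paper omits.
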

\begin{example}\label{ex:pentagon}
Fig.\,\ref{pd-complex} represents the pipe dream complex for $w=\overline{1432}$. The pipe dreams are split into groups corresponding to their shapes; each pipe dream is indexed by the corresponding monomial $\beta^{\mathrm{ex}(P)}\xx^P$ occuring in the Grothendieck polynomial.
	\begin{figure}[ht!]
		$$\includegraphics[width=17cm]{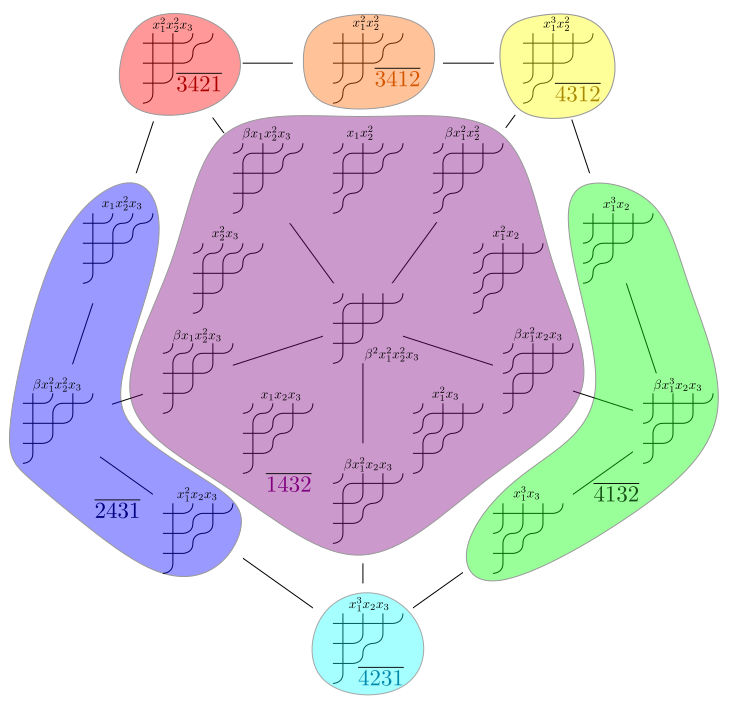}$$
		\caption{The pipe dream complex for $w=\overline{1432}$}
		\label{pd-complex}
	\end{figure}
\end{example}
For $k\in \mathbb Z_{\geq 0}$, we introduce the following notation: $\PD_k(w)=\{P\in \PD(w)\mid \mathrm{ex}(P)=k\}$.
\begin{corollary}\label{euler}
For each permutation $w\in\Sc_n$, we have
\[
\sum_{k=0}^{n(n-1)/2}(-1)^k |\PD_k(w)|=1.
\]
\end{corollary}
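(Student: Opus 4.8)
The plan is to read the alternating sum topologically, as an Euler characteristic of the pipe dream complex relative to its boundary. First I would rewrite the left-hand side as $\sum_{P\in\PD(w)}(-1)^{\mathrm{ex}(P)}$, since $|\PD_k(w)|$ is by definition the number of $P\in\PD(w)$ with $\mathrm{ex}(P)=k$. Setting $\Delta:=\Delta(\cQ_{0,n},w)$, the bijection between $\PD(w)$ and the interior faces of $\Delta$ carries $\mathrm{ex}(P)$ to the codimension of the corresponding face; this is precisely the matching of exponents that underlies Corollary~\ref{groth-sum}, where $\mathrm{ex}$ and $\mathrm{codim}$ play interchangeable roles. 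Hence
\[
\sum_{k=0}^{n(n-1)/2}(-1)^k|\PD_k(w)|=\sum_{F\in\mathrm{int}(\Delta)}(-1)^{\mathrm{codim}F}.
\]

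Next I would dispose of the degenerate case $w=w_0$: there is a single pipe dream of shape $w_0$, it is reduced, and the sum equals $1$ outright. For $w\ne w_0$ I would invoke the ball/sphere dichotomy. Since $\cQ_{0,n}$ is a reduced word for $w_0$, its Demazure product is $\delta(\cQ_{0,n})=w_0\ne w$, so by the boundary criterion of Theorem~\ref{thm:ball-or-sphere} the empty subword is a boundary face and $\Delta$ is homeomorphic to a ball, say of dimension $d$, rather than to a sphere. In particular every interior face is nonempty. Writing $\mathrm{codim}F=d-\dim F$ I would then factor out the global sign,
\[
\sum_{F\in\mathrm{int}(\Delta)}(-1)^{\mathrm{codim}F}=(-1)^{d}\sum_{F\in\mathrm{int}(\Delta)}(-1)^{\dim F}.
\]

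It remains to evaluate the inner sum. Splitting the nonempty faces of $\Delta$ into those on $\partial\Delta$ and the interior ones, and writing $\chi$ for the ordinary Euler characteristic, I would use
\[
\sum_{F\in\mathrm{int}(\Delta)}(-1)^{\dim F}=\chi(\Delta)-\chi(\partial\Delta).
\]
Because $\Delta$ is a $d$-ball it is contractible, so $\chi(\Delta)=1$, while its boundary is a $(d-1)$-sphere, so $\chi(\partial\Delta)=1+(-1)^{d-1}$. The inner sum is therefore $1-\bigl(1+(-1)^{d-1}\bigr)=(-1)^{d}$, and combining this with the previous display gives $(-1)^{d}\cdot(-1)^{d}=1$, independently of the parity of $d$.

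The substance here is the ball/sphere theorem already at our disposal; everything else is bookkeeping, and that is where I expect the only delicate points to sit. Concretely, I would have to confirm that $\mathrm{ex}=\mathrm{codim}$ holds termwise under the bijection (not merely that the two generating functions in Theorem~\ref{thm:gFK} and Corollary~\ref{groth-sum} agree), that for $w\ne w_0$ the empty face is genuinely a boundary face so the Euler-characteristic splitting runs over nonempty faces exactly, and that the extreme $d=0$ behaves correctly (there $\partial\Delta=\varnothing$ with $\chi=0$, matching $1+(-1)^{-1}$). Recognizing the alternating sum as $(-1)^{d}\bigl(\chi(\Delta)-\chi(\partial\Delta)\bigr)$ is the only real idea; the classical values $\chi(\text{ball})=1$ and $\chi(S^{d-1})=1+(-1)^{d-1}$ then finish the computation.
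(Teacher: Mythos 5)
Your proposal is correct and follows essentially the same route as the paper: both rewrite the alternating sum as $\sum_{P\in\mathrm{int}(\Delta(\cQ_{0,n},w))}(-1)^{\mathrm{codim}(P)}$ (the paper phrases this as specializing $\Gf^{(\beta)}_w$ at $\beta=-1$, $\xx=(1,\dots,1)$) and then evaluate it via the Knutson--Miller ball/sphere theorem using $\chi(\text{ball})=1$ and $\chi(S^{d-1})=1-(-1)^{d}$, arriving at $(-1)^{d}\cdot(-1)^{d}=1$. Your extra care about the empty face lying on the boundary when $w\ne w_0$ and the termwise identity $\mathrm{ex}=\mathrm{codim}$ is sound and matches the paper's setup.
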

\begin{proof}
Specializing the Grothendieck polynomial at $\xx=(1,1,\ldots,1)$ and $\beta=-1$, we obtain the following relation:
\begin{multline*}
\Gf_w^{(-1)}(1,1,\ldots,1)=\sum_{P\in\PD(w)} (-1)^{\mathrm{ex}(P)}=\sum_{k=0}^{n(n-1)/2}(-1)^k |\PD_k(w)|=\sum_{P\in\mathrm{int}(\Delta(\cQ_{0,n}, w))} (-1)^{\mathrm{codim}(P)}=\\
=\sum_{P \in \Delta(\cQ_{0,n},w)}(-1)^{\mathrm{codim}(P)}+\sum_{P \in \partial\Delta(\cQ_{0,n},w)}(-1)^{\mathrm{codim}(P)}=(-1)^d\chi_{\Delta(\cQ_{0,n},w)}+(-1)^{d-1}\chi_{\partial\Delta(\cQ_{0,n},w)}.
\end{multline*}
Here $d=n(n-1)/2-\ell(w)$ is the dimension of the pipe dream complex, and  $\chi_\Delta$ stands for the Euler characteristic of $\Delta$.

For each $w\ne\delta(\cQ_{0,n})=w_0$, the  pipe dream complex $\Delta(\cQ_{0,n}, w)$ is homeomorphic to a $d$-dimensional ball, and its boundary is homeomorphic to a  $(d-1)$-dimensional sphere (for $w=w_0$, the corresponding pipe dream complex is a point). This means that 
\[
\chi_{\Delta(\cQ_{0,n},w)}=1,\]
\[\chi_{\partial\Delta(\cQ_{0,n},w)}=1-(-1)^d,
\]
and hence
\[
\Gf^{(-1)}(1,1,\ldots,1)=\sum_{k=0}^{n(n-1)/2}(-1)^k |\PD_k(w)|=(-1)^d\left(1-(1-(-1)^d)\right)=(-1)^{2d}=1.
\]
\end{proof}

\section{Slide complexes}\label{sec:sc}

In this section we provide the main construction of this paper: we define the stratification of subword complexes into strata corresponding to slide (or glide) orbits. These strata are called \emph{slide complexes}. We show that, just like subword complexes, all such strata are homeomorphic to balls or spheres.

\subsection{Slide complexes in general}\label{ssec:sc-gen}
As before, let $(\Pi, \Sigma)$ be a Coxeter system.
\begin{definition}
Let  $\cQ, \cS$ be two words in the alphabet $\Sigma$. By \emph{slide complex of subwords} $\widetilde{\Delta}(\cQ, \cS)$ we denote the set of subwords $\cQ\smallsetminus \cP$, such that their complements $\cP$ contain  $\cS$ as a subword. Similarly to the case of subword complexes, this set of subwords has a natural structure of a simplicial complex.
\end{definition}

The following theorem is similar to Theorem~\ref{thm:subword-shellable}. 
\begin{theorem}\label{thm:shellable-slides}
Slide complexes are vertex decomposable and hence shellable.
\end{theorem}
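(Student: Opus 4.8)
The plan is to mimic the proof of Theorem~\ref{thm:subword-shellable} for ordinary subword complexes, since the slide complex $\widetilde{\Delta}(\cQ,\cS)$ is defined by exactly the same kind of ``deletion complement contains a target'' condition, with the only difference that the target $\cS$ is now a \emph{fixed word} rather than the set of reduced words for a permutation $\pi$. I will prove vertex decomposability by induction on the length $m$ of $\cQ$; shellability then follows from the cited Proposition. The base case $m=0$ (or the case where $\cQ$ is the smallest word admissible) gives a simplex, which is vertex decomposable by definition.

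For the inductive step, write $\cQ=(\sigma_1,\dots,\sigma_m)$ and single out the last letter, setting $v=\sigma_m$ as the candidate vertex, with $\cQ'=(\sigma_1,\dots,\sigma_{m-1})$. The key computation is to identify $\del(v,\widetilde{\Delta}(\cQ,\cS))$ and $\link(v,\widetilde{\Delta}(\cQ,\cS))$ with smaller slide complexes. A face $\cQ\smallsetminus\cP$ contains $v$ (as a vertex of the complex) precisely when $\sigma_m\notin\cP$, i.e.\ $\cS$ must be found as a subword inside $\cP\subseteq\cQ'$. Thus the link, which records faces whose complement still contains $\cS$ after we are allowed to use $\sigma_m$ as a deletion, should be $\link(v,\widetilde{\Delta}(\cQ,\cS))=\widetilde{\Delta}(\cQ',\cS)$, one dimension down. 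For the deletion, I expect the natural two-case split according to whether the last letter $\sigma_m$ of $\cQ$ agrees with the last letter of $\cS$: if $\cS$ ends in $\sigma_m$, then $\del(v,\widetilde{\Delta}(\cQ,\cS))=\widetilde{\Delta}(\cQ',\cS')$ where $\cS'$ is $\cS$ with its last letter removed; otherwise $\del(v,\widetilde{\Delta}(\cQ,\cS))=\widetilde{\Delta}(\cQ',\cS)$. In either case both pieces are slide complexes on the shorter word $\cQ'$, so the inductive hypothesis applies and their dimensions line up with the requirements in the definition of vertex decomposability.

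The step I expect to be the main obstacle is verifying purity and the exact dimension bookkeeping in the deletion case, so that $\del(v,\cdot)$ genuinely has the same dimension as the ambient complex while $\link(v,\cdot)$ drops by one. Unlike the subword-complex setting, where all reduced words for $\pi$ have the same length and purity is automatic, here I must check that every maximal subword complement containing $\cS$ has complement of the fixed size $\ell(\cS)$ --- but since $\cS$ is a single fixed word of fixed length, a complement ``contains $\cS$'' should again force the facets to all have complements of cardinality exactly $|\cS|$ (the minimal complements), giving purity of dimension $m-|\cS|$ directly. The care is in confirming that the two deletion subcomplexes have the right top dimension and are nonempty exactly when needed; this is a finite bookkeeping argument matching letters, and I would handle the degenerate cases (when $\cS$ is empty, or when $\cQ'$ no longer contains $\cS$) separately to confirm they yield simplices.

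Once the identifications $\del(v,\widetilde{\Delta}(\cQ,\cS))$ and $\link(v,\widetilde{\Delta}(\cQ,\cS))$ as smaller slide complexes are established, the induction closes immediately: by the inductive hypothesis both are vertex decomposable of the correct dimensions, so $\widetilde{\Delta}(\cQ,\cS)$ is vertex decomposable with $v=\sigma_m$ as the shedding vertex. Invoking the Proposition of~\cite{vertex-dec} that vertex decomposable complexes are shellable then finishes the proof.
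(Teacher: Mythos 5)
Your proof is correct and takes essentially the same route as the paper's: an induction on the length of $\cQ$ in which the link and the deletion of an end letter $\sigma$ of $\cQ$ are identified with smaller slide complexes, namely $\link(\sigma,\widetilde{\Delta}(\cQ,\cS))=\widetilde{\Delta}(\cQ',\cS)$ and $\del(\sigma,\widetilde{\Delta}(\cQ,\cS))$ equal to $\widetilde{\Delta}(\cQ',\cS')$ or $\widetilde{\Delta}(\cQ',\cS)$ according to whether the end letter of $\cS$ matches $\sigma$. The only cosmetic difference is that you peel off the \emph{last} letter of $\cQ$ while the paper peels off the \emph{first}; since subword containment is symmetric under reversal, the two versions are interchangeable.
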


\begin{proof}
It is clear that slide complexes are pure.

Let $\cQ=(\sigma, \sigma_2, \ldots, \sigma_m)$ and $\cS=(s_{j_1},s_{j_2},\ldots,s_{j_l})$ be two words in the alphabet $\Sigma$. Let $\cQ'=(\sigma_2, \ldots, \sigma_m)$ and $\cS'=(s_{j_2},\ldots, s_{j_l})$. Then $\link(\sigma, \widetilde{\Delta}(\cQ, \cS))=\widetilde{\Delta}(\cQ',\cS)$. If the word $\cS$ starts with the letter $\sigma$, then $\del(\sigma, \widetilde{\Delta}(\cQ, \cS))= \widetilde{\Delta}(\cQ', \cS')$. Otherwise we have $\del(\sigma,  \widetilde{\Delta}(\cQ, \cS))=\link(\sigma, \widetilde{\Delta}(\cQ, \cS))= \widetilde{\Delta}(\cQ', \cS)$.
	
This means that for a vertex $\sigma$, the result of its deletion and its link in $\widetilde{\Delta}(\cQ, \cS)$ are slide complexes. Then we use the induction by the length of $\cQ$.
\end{proof}

Let us introduce an analogue of the Demazure product for subwords.
\begin{definition}
Denote by $\widetilde{\delta}(\cQ)$ the word  obtained from $\cQ$ by replacing each maximal subsequence of consecutive identical letters $s_i\ldots s_i$ by one letter $s_i$. 
	
For example,  $\widetilde\delta(s_1s_1s_2s_1s_2s_2s_2)=s_1s_2s_1s_2$.
\end{definition}

\begin{remark}
It is clear from the definition that for any word $\cQ$, one has $\widetilde\delta(\delta(\cQ))=\delta(\widetilde\delta(\cQ))=\delta(\cQ)$.
\end{remark}

The following result is the main theorem of this paper. It is analogous to~Thm~\ref{thm:ball-or-sphere} due to Knutson and Miller.

\begin{theorem}\label{thm:main}
Let $\cQ$ and $\cS$ be two words in the alphabet $\Sigma$, and let $\widetilde{\delta}(\cS)=\cS$. Then the slide complex $\widetilde{\Delta}(\cQ, \cS)$ is homeomorphic to a sphere if $\widetilde{\delta}(\cQ)=\cS$ and to a ball otherwise. A face $\cQ\smallsetminus \cP$ belongs to the boundary of this complex if and only if $\widetilde{\delta}(\cP)\ne \cS$.
\end{theorem}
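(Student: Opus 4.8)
The plan is to recognize the slide complex as an \emph{ordinary} subword complex for a suitably chosen Coxeter system and then invoke Theorem~\ref{thm:ball-or-sphere} verbatim. Consider the \emph{universal} Coxeter system $(\Pi^{\mathrm{univ}}, \Sigma)$ on the same alphabet $\Sigma$: the free product of copies of $\ZZ/2\ZZ$, so that the only relations are $\sigma^2=e$ and there are no braid or far-commutativity relations. In this group a word reduces by cancelling adjacent pairs of equal letters, so the reduced words are exactly the words with no two consecutive equal letters, and, crucially, every element has a \emph{unique} reduced word (uniqueness of normal form in a free product). In particular, the hypothesis $\widetilde{\delta}(\cS)=\cS$ says precisely that $\cS$ is reduced; let $\pi\in\Pi^{\mathrm{univ}}$ be the element it represents, so $\cS$ is the unique reduced word for $\pi$.

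The first step is to check that the two simplicial complexes agree on the nose, $\widetilde{\Delta}(\cQ,\cS)=\Delta(\cQ,\pi)$, as subsets of $2^{\cQ}$. A subword $\cP$ of $\cQ$ contains $\pi$ iff some subword of $\cP$ is a reduced word for $\pi$; since the reduced word for $\pi$ is unique and equals $\cS$, this happens iff $\cP$ contains $\cS$ as a subword, which is exactly the defining condition of $\widetilde{\Delta}(\cQ,\cS)$. Hence the face sets coincide, and so do the incidence relations.

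The second step is to identify $\widetilde{\delta}$ with the Demazure product $\delta$ of $(\Pi^{\mathrm{univ}},\Sigma)$. Appending a letter $\sigma$ to a reduced (repeat-free) word $w$ increases its length iff the last letter of $w$ differs from $\sigma$, and otherwise the terminal pair cancels and the length drops; thus the Demazure recursion maintains a repeat-free word, appending $\sigma$ only when it differs from the current last letter. This is precisely the operation of collapsing each maximal run of identical letters to one letter, i.e.\ $\delta=\widetilde{\delta}$ on $(\Pi^{\mathrm{univ}},\Sigma)$.

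With these identifications, Theorem~\ref{thm:ball-or-sphere} applied to $\Delta(\cQ,\pi)$ delivers everything at once: $\widetilde{\Delta}(\cQ,\cS)=\Delta(\cQ,\pi)$ is a ball or a sphere; it is a sphere iff $\delta(\cQ)=\pi$, i.e.\ iff $\widetilde{\delta}(\cQ)=\cS$; and a face $\cQ\smallsetminus\cP$ lies in the boundary iff $\delta(\cP)\ne\pi$, i.e.\ iff $\widetilde{\delta}(\cP)\ne\cS$. The only place any real work hides is the uniqueness of reduced words in the free product, which is exactly what makes ``contains $\cS$ as a literal subword'' and ``contains the element $\pi$'' the same condition; once this is secured, the theorem is a pure translation of Knutson--Miller. (As a cross-check, one could instead argue directly that $\widetilde{\Delta}(\cQ,\cS)$ is a pseudomanifold --- every codimension-one face lies in at most two facets, because a subword one letter longer than $\cS$ can be collapsed onto the repeat-free word $\cS$ only along a single run of identical letters, necessarily of length at most two --- and combine this with the shellability of Theorem~\ref{thm:shellable-slides}; but the reduction above is cleaner and yields the boundary criterion for free.)
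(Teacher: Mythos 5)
Your proposal is correct and follows essentially the same route as the paper: the paper's proof also passes to the free (universal) Coxeter group on $\Sigma$, identifies $\widetilde{\delta}$ with its Demazure product and $\widetilde{\Delta}(\cQ,\cS)$ with the subword complex $\Delta(\cQ,\pi)$, and then cites Theorem~\ref{thm:ball-or-sphere}. Your write-up merely makes explicit the details the paper leaves implicit (uniqueness of reduced words in the free product and the verification that $\delta=\widetilde{\delta}$ there), which is a fine expansion of the same argument.
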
 

\begin{proof}  Consider the \emph{free Coxeter group} $\widehat \Pi$ generated by $\Sigma$ modulo the relations $s_i^2=e$, without any other relations (this corresponds to all marks at the edges of the Coxeter graph being equal to $\infty$). The elements of $\widehat \Pi$ can be naturally identified with words in the alphabet $\Sigma$ without identical consecutive letters. Under this identification, $\widetilde\delta(\cS)$ is just the Demazure product of the word $\cS$, and $\widetilde\delta(\cS)=\cS$ if and only if $\cS$ is reduced when considered as a word in $\widehat \Pi$.

This means that the slide complex $\widetilde{\Delta}(\cQ,\cS)$ is nothing but the subword complex $\Delta(\cQ,\cS)$ for the  group $\widehat\Pi$. The desired statement follows directly from~\cite[Thm~3.7, Cor.~3.8]{KnutsonMiller04}. 
\end{proof}


\begin{remark}
If $\widetilde{\delta}(\cS)\ne \cS$, a slide complex is not necessarily homeomorphic to a ball or to a sphere. For instance, if $\cQ=s_1s_1s_1s_1$ and $\cS=s_1s_1$, the complex $\widetilde{\Delta}(\cQ,\cS)$ is the 1-skeleton of a tetrahedron.
\end{remark}

\begin{remark} Another proof of Theorem~\ref{thm:main} can be obtained  by repeating the steps used in the proof of~\cite[Thm~3.7]{KnutsonMiller04}; essentially all the statements used in the latter theorem for the subword complexes also hold for the slide complexes. This proof is outlined in our short announcement~\cite{SmirnovTutubalina20}.
\end{remark}

Example~\ref{ex:pentagon} shows that the interior of the pipe dream complex for $w=\overline{1432}$ is decomposed into slide complexes; this can be viewed as a topological interpretation of the decomposition of the Schubert polynomial $\Sf_{1432}$ (resp. the Grothendieck polynomial $\Gf_{1432}^{(\beta)}$) into the sum of slide (resp. glide) polynomials. The following proposition  generalizes it for the case of arbitrary subword complexes; in the next subsection we will apply this proposition to the case of pipe dream complexes.

\begin{proposition}
	The interior part of the subword complex $\mathrm{int}\left(\Delta(\cQ,w)\right)$ can be decomposed into the disjoint union of the interior parts of slide complexes: 
	\begin{equation}
	\mathrm{int}\left(\Delta(\cQ,w)\right)=\bigsqcup_{\substack{S \textrm{ word in } \Sigma\\\widetilde{\delta}(\cS)=\cS\\\delta(S)=w}}\mathrm{int}\left(\widetilde\Delta(\cQ,\cS)\right).
	\label{eq:decomp}\end{equation}
\end{proposition}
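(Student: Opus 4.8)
The plan is to reduce the statement to a purely combinatorial matching of \emph{interior faces}, since all the topology---the ball/sphere dichotomy and the explicit description of which faces are boundary faces---is already supplied by Theorems~\ref{thm:ball-or-sphere} and~\ref{thm:main}. Both $\Delta(\cQ,w)$ and every slide complex $\widetilde\Delta(\cQ,\cS)$ are subcomplexes of the full simplex on the vertex set given by the letters of $\cQ$. Fixing one geometric realization of that simplex, a face $\cQ\smallsetminus\cP$ is realized as the same geometric cell in whichever complex it occurs, so its relative interior is unambiguous. Since the interior of such a complex, viewed as a point set, is the disjoint union of the relative interiors of its interior faces, it suffices to show that the interior faces of $\Delta(\cQ,w)$ are partitioned by the interior faces of the slide complexes $\widetilde\Delta(\cQ,\cS)$ indexed as in~\eqref{eq:decomp}.

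First I would record the two interior descriptions. By Theorem~\ref{thm:ball-or-sphere}, the interior faces of $\Delta(\cQ,w)$ are exactly the $\cQ\smallsetminus\cP$ with $\delta(\cP)=w$. By Theorem~\ref{thm:main}---recalling from its proof that $\widetilde\Delta(\cQ,\cS)$ is the subword complex of $\cS$ for the free Coxeter group $\widehat\Pi$, in which $\widetilde\delta$ plays the role of the Demazure product---the interior faces of $\widetilde\Delta(\cQ,\cS)$ are exactly the $\cQ\smallsetminus\cP$ with $\widetilde\delta(\cP)=\cS$.

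The heart of the argument is then the assignment $\cP\mapsto\widetilde\delta(\cP)$ applied to interior faces. Given an interior face with $\delta(\cP)=w$, I would set $\cS:=\widetilde\delta(\cP)$ and check that $\cS$ satisfies both indexing conditions of~\eqref{eq:decomp}: one has $\widetilde\delta(\cS)=\widetilde\delta(\widetilde\delta(\cP))=\widetilde\delta(\cP)=\cS$ by idempotency of $\widetilde\delta$ (a word collapsed to have no equal consecutive letters is unchanged by a second collapse), and $\delta(\cS)=\delta(\widetilde\delta(\cP))=\delta(\cP)=w$ by the Remark that $\delta(\widetilde\delta(\cdot))=\delta(\cdot)$. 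Thus $\cQ\smallsetminus\cP$ is an interior face of the \emph{single} slide complex $\widetilde\Delta(\cQ,\cS)$ with this $\cS$. Conversely, if $\widetilde\delta(\cP)=\cS$ for some $\cS$ in the index set, then $\delta(\cP)=\delta(\widetilde\delta(\cP))=\delta(\cS)=w$, so $\cQ\smallsetminus\cP$ is an interior face of $\Delta(\cQ,w)$; it is in particular a genuine face, since $\delta(\cP)=w$ forces $\cP$ to contain a reduced word for $w$. This establishes that the two families of interior faces coincide, and disjointness is automatic because the word $\widetilde\delta(\cP)$ is uniquely determined by $\cP$.

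Passing back from faces to their relative interiors yields~\eqref{eq:decomp} as an equality of point sets; indices $\cS$ for which $\cQ$ admits no subword $\cP$ with $\widetilde\delta(\cP)=\cS$ simply contribute the empty set and do no harm. The only step requiring genuine care is the bookkeeping that ties together ``interior of the complex'' as a topological subspace and ``interior faces'' as combinatorial data: one must confirm that each relevant $\cQ\smallsetminus\cP$ is simultaneously a face of the ambient complex and of the relevant slide complex, realized by the same geometric cell, so that their relative interiors literally agree. Once the two interior-face descriptions above are in hand and the common realization is fixed, this is routine, and the key algebraic input is entirely carried by the identity $\delta\circ\widetilde\delta=\delta$ together with the idempotency of $\widetilde\delta$.
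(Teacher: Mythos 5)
Your proposal is correct and follows essentially the same route as the paper's own proof: both use the boundary characterizations of interior faces from Theorems~\ref{thm:ball-or-sphere} and~\ref{thm:main} and then match faces via the assignment $\cP\mapsto\widetilde\delta(\cP)$, with the identities $\delta(\widetilde\delta(\cP))=\delta(\cP)$ and $\widetilde\delta(\widetilde\delta(\cP))=\widetilde\delta(\cP)$ doing all the work in both directions. Your added bookkeeping about geometric realizations and relative interiors, and the explicit remark that disjointness follows from the uniqueness of $\widetilde\delta(\cP)$, are points the paper leaves implicit but change nothing in substance.
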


\begin{proof}
Let $\cQ\smallsetminus \cP$ be an internal face of the subword complex of $w$. This means that $\delta(\cP)=w$.  Let $\cS=\widetilde{\delta}(\cP)$. It is clear that $\delta(\cS)=\delta(\cP)=w$, $\widetilde{\delta}(\cS)=\cS$, and hence $\cQ\smallsetminus \cP$ belongs to the interior of the slide complex $\widetilde\Delta(\cQ,\cS)$.

Let us show the converse. If  $\cQ\smallsetminus \cP$  is an interior face for the slide complex  $\widetilde\Delta(\cQ,\cS)$, with $\widetilde{\delta}(\cS)=\cS$  and $\delta(\cS)=w$, then $\widetilde\delta(\cP)=\cS$. This means that  $\delta(\cP)=\delta(\widetilde{\delta}(\cP))=\delta(\cS)=w$ and $\cQ\smallsetminus \cP$ is an interior face of the subword complex $\Delta(\cQ,w)$.

\end{proof}

\subsection{Slide complexes in pipe dream complexes}\label{ssec:scpd}
In this subsection we study the relation between the slide and glide orbits of pipe dreams and the slide complexes.

As we have seen, the pipe dreams of shape $w\in\Sc_n$, both reduced and non-reduced,  bijectively correspond to the internal faces of the pipe dream complex $\Delta(\cQ_{0,n}, w)$. This complex is homeomorphic to a ball unless $w=w_0$. 

\begin{proposition}
This decomposition of the interior part of $\Delta(\cQ_{0,n}, w)$ into the interior parts of slide complexes is consistent with the decomposition of the set $\PD(w)$ into glide orbits: the pipe dreams from each glide orbit bijectively correspond to the pipe dreams from the interior part of the corresponding slide complex.
\end{proposition}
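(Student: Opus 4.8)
The plan is to establish the claimed bijection by unwinding the two decompositions involved and showing they match term by term. First I would recall the two correspondences already at hand. On the combinatorial side, the set $\PD(w)$ is partitioned into glide orbits, where the glide orbit of a reduced-or-not quasi-Yamanouchi pipe dream $Q\in\QPD(w)$ is $\dst^{-1}(Q)$; equivalently, two pipe dreams lie in the same glide orbit exactly when repeated slide moves send them to the same quasi-Yamanouchi representative. On the topological side, the preceding proposition gives
\[
\mathrm{int}\left(\Delta(\cQ_{0,n},w)\right)=\bigsqcup_{\substack{\cS\textrm{ word in }\Sigma\\ \widetilde\delta(\cS)=\cS,\ \delta(\cS)=w}}\mathrm{int}\left(\widetilde\Delta(\cQ_{0,n},\cS)\right),
\]
and I already know from \S\ref{ssec:pdc} that pipe dreams of shape $w$ correspond bijectively to interior faces $\cQ_{0,n}\smallsetminus\cP$ of the pipe dream complex, a face $\cQ_{0,n}\smallsetminus\cP$ sitting in the interior of $\widetilde\Delta(\cQ_{0,n},\cS)$ precisely when $\widetilde\delta(\cP)=\cS$.

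The core of the argument is to show that, under the identification $P\leftrightarrow(\cQ_{0,n}\smallsetminus\word(P))$, a single slide move on a pipe dream corresponds exactly to the operation of reducing two consecutive identical letters $s_is_i$ to a single $s_i$ in the associated subword $\word(P)$. I would verify this by examining the two local configurations in the definition of the slide move $S_i$: the reduced case $\tableau{{}&\cross\\ {}&{}}\mapsto\tableau{{}&{}\\ \cross&{}}$ moves a cross one step southwest, while the nonreduced case $\tableau{{}&\cross\\ \cross&{}}\mapsto\tableau{{}&{}\\ \cross&{}}$ deletes a redundant cross. The key observation is that in the chosen reading word $\cQ_{0,n}$, the cells involved in a slide move carry the same simple reflection $s_k$, placed in adjacent positions of the word; thus the slide move either permutes the positions occupied by equal letters (leaving $\widetilde\delta(\word(P))$ unchanged) or collapses a doubled letter $s_ks_k$ to a single $s_k$. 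In every case $\widetilde\delta(\word(P))$ is invariant under slide moves.

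It follows that $\widetilde\delta(\word(P))$ is a slide-move invariant, so it is constant on each glide orbit; and since the destandardization $\dst$ repeatedly applies slide moves, the invariant $\widetilde\delta(\word(P))=\widetilde\delta(\word(\dst(P)))$ depends only on the quasi-Yamanouchi representative $Q=\dst(P)$. Setting $\cS=\cS(Q):=\widetilde\delta(\word(Q))$, I would check that $\widetilde\delta(\cS)=\cS$ and $\delta(\cS)=\delta(\word(Q))=w$, so $\cS(Q)$ indexes one of the slide complexes appearing in the disjoint union. The assignment $P\mapsto\cS(\dst(P))$ then shows that each glide orbit maps into a single interior part $\mathrm{int}(\widetilde\Delta(\cQ_{0,n},\cS))$, namely the set of faces with $\widetilde\delta(\word(P))=\cS$. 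Conversely, any $P$ with $\widetilde\delta(\word(P))=\cS$ has $\dst(P)$ equal to the unique quasi-Yamanouchi pipe dream with that invariant, so it lies in the glide orbit of that $Q$; hence the map between glide orbits and interior parts of slide complexes is a bijection. The main obstacle I anticipate is the careful bookkeeping in the second paragraph: one must confirm that the two cells swapped or collapsed by a slide move really do correspond to consecutive equal letters of $\cQ_{0,n}$ (this uses the specific antidiagonal reading order of the word), and that the nonreduced slide move corresponds precisely to deleting one of a doubled pair rather than to some other word operation.
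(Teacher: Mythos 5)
Your proposal is correct and follows essentially the same route as the paper's own proof: both arguments show that a slide move either merely re-embeds the subword $\word(P)$ in $\cQ_{0,n}$ or collapses a doubled letter $s_as_a$ to $s_a$, so that $\widetilde\delta(\word(P))$ is a glide-orbit invariant, and both conclude via the uniqueness of the quasi-Yamanouchi pipe dream with a given word $\cS$ (which the paper justifies by characterizing quasi-Yamanouchi pipe dreams as the rightmost embeddings of words without consecutive repeated letters). One small correction to the bookkeeping you flag at the end: the two cells involved in a slide move are consecutive \emph{occurrences of the same letter} in $\cQ_{0,n}$, not literally adjacent letters of $\cQ_{0,n}$; they become adjacent within the subword $\word(P)$ because the precondition of $S_i$ (no crosses in row $i$ left of the moving cross, none in row $i+1$ to its right) forces every intervening letter of $\cQ_{0,n}$ to be absent from $\word(P)$ --- exactly the verification the paper carries out.
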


\begin{proof}
 Let $P\in\PD(w)$ be a pipe dream corresponding to the subword $\mathrm{word}(P)$ in $\cQ_{0,n}=(\sigma_1,\sigma_2,\ldots,\sigma_{n(n-1)/2})$. 
 
Suppose that the action of the slide move  $S_i$ on $P$ is not identical: it moves a cross $(i,j)$ southwest to the position $(i+1,j-1)$.  Let $\sigma_k$ and $\sigma_{k+m}$ be two letters corresponding to the old and the new positions of this cross in $\cQ_{0,n}$ (both these letters are equal to $s_{i+j-1}$). Since the pipe dream $P$ has no crosses in the  $i$-th row to the left of the $j$-th column, and the row $i+1$ does not contain crosses to the right of the $(j-1)$-st column, this means that the letters $\sigma_{k+1},\ldots,\sigma_{k+m-1}$ do not occur in the subword $\word(P)$. The slide move  $S_i$ acts on  $\word(P)$ as follows: $\word(P)$ contains either both letters  $\sigma_k$ and $\sigma_{k+m}$, or only  $\sigma_k$. In the meantime,  $\word(S_i(P))$ contains only the letter $\sigma_{k+m}$, but not $\sigma_k$.
 
So each slide move either does not change the word $\word(P)$ or replaces two identical consecutive letters $s_{i+j-1}s_{i+j-1}$ in it by $s_{i+j-1}$. Thus slide moves preserve $\widetilde\delta$: we have $\widetilde{\delta}(\word(P))=\widetilde{\delta}(\mathrm{word}(S_i(P)))$ and for any two pipe dreams from the same glide orbit the corresponding faces belong to the interior of the same slide complex.

The converse is also true. Consider $P\in\PD(w)$. Suppose that $\sigma_k$ and $\sigma_{k+m}$ are two identical letters in $\cQ_{0,n}$, such that
\begin{itemize}
	\item the subword $\word(P)$ contains either $\sigma_k$ or both of them;
	\item the interval $\sigma_{k+1},\ldots,\sigma_{k+m-1}$ does not contain letters equal to $\sigma_k=\sigma_{k+m}$;
	\item the letters $\sigma_{k+1},\ldots,\sigma_{k+m-1}$ do not occur in the subword $\word(P)$.
\end{itemize}
Then replacing $\sigma_k$ or the pair of letters $(\sigma_k,\sigma_{k+m})$ in the subword $\word(P)$ by the letter $\sigma_{k+m}$ corresponds to a slide move applied to $P$ (and preserves $\widetilde{\delta}(\word(P))$). Now it is easy to note that a pipe dream $Q$ is quasi-Yamanouchi if and only if $\word(Q)=\cS$ does not contain consecutive identical letters and the subword $\word(Q)$ is the rightmost appearance (this means that no letter can be moved to the right by the described operation) of the word $\cS$ in $\cQ_{0,n}$. Of course, this rightmost appearance is unique.

So if $\widetilde{\delta}(\cS)=\cS$ and $\cQ_{0,n}$ contains $\cS$ as a subword, then there exists exactly one quasi-Yamanouchi pipe dream $Q$ such that $\word(Q)=\cS$. For a pipe dream $P\in\PD(w)$ such that $\widetilde{\delta}(\word(P))=\cS$ it can be easily seen that $\word(\dst(P))=\cS$ and thus $\dst(P)=Q$, so $P\in\dst^{-1}(Q)$.

Equivalently, if $\word(P_1)$ and $\word(P_2)$ belong to the interior of the same slide complex, then $\widetilde{\delta}(\word(P_1))=\widetilde{\delta}(\word(P_2))$ and $P_1$ and $P_2$ belong to the same glide orbit.
\end{proof}

This implies the following corollary, which is similar to Corollary~\ref{groth-sum}: it states that a glide polynomial is obtained as the sum of monomials over the interior faces of the corresponding slide complex.
\begin{corollary}\label{cor:glide}
Let $Q\in\QPD(w)$. Then
\[
\Gc_Q^{(\beta)}=\sum_{P\in\mathrm{int}(\widetilde\Delta(\cQ_{0,n}, \mathrm{word}(Q)))} \beta^{\mathrm{codim}(P)}\xx^P.
\]
Here by $\xx^P$ we denote the monomial for the pipe dream corresponding to the face $P$.
\end{corollary}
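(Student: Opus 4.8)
The plan is to deduce Corollary~\ref{cor:glide} by combining the preceding Proposition (which identifies the glide orbit $\dst^{-1}(Q)$ with the interior faces of the slide complex $\widetilde\Delta(\cQ_{0,n},\word(Q))$) with the definition of the glide polynomial and the standard dictionary between faces of the pipe dream complex, codimensions, and excess. The heart of the matter is purely bookkeeping: we already know from the Proposition that the map $P\mapsto \word(P)$ restricts to a bijection between the glide orbit of $Q$ and the set of interior faces of the slide complex, so it suffices to check that the exponents match under this bijection.

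First I would fix $\cS=\word(Q)$ and recall that, by the Proposition, a pipe dream $P\in\PD(w)$ satisfies $\widetilde\delta(\word(P))=\cS$ if and only if $P\in\dst^{-1}(Q)$, and that in this case the face $\cQ_{0,n}\smallsetminus\word(P)$ lies in the interior of $\widetilde\Delta(\cQ_{0,n},\cS)$. Thus the summation index sets on the two sides agree under $P\leftrightarrow \cQ_{0,n}\smallsetminus\word(P)$, and each side is a sum of $\xx^P$ weighted by some power of $\beta$. So I would then verify the equality of the two weights. On the glide-polynomial side the weight is $\beta^{\mathrm{ex}(P)-\mathrm{ex}(Q)}$ by definition of $\Gc^{(\beta)}_Q$; on the slide-complex side it is $\beta^{\codim(P)}$, where the codimension is taken inside the slide complex $\widetilde\Delta(\cQ_{0,n},\cS)$.

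The one genuine point to pin down, and the main obstacle, is the identity $\mathrm{ex}(P)-\mathrm{ex}(Q)=\codim_{\widetilde\Delta(\cQ_{0,n},\cS)}(P)$. Here I would argue as follows. For any pipe dream $R$ of shape $w$, the number of crosses of $R$ equals $\ell(w)+\mathrm{ex}(R)$, since the reduced part accounts for $\ell(w)$ crosses and the excess counts the redundant ones; equivalently $|\word(R)|=\ell(w)+\mathrm{ex}(R)$. The slide complex $\widetilde\Delta(\cQ_{0,n},\cS)$ has dimension $|\cQ_{0,n}|-|\cS|-1$ (it is pure, with facets the complements of subwords representing $\cS$ in the free Coxeter group $\widehat\Pi$, which all have length $|\cS|$), and its facets correspond to pipe dreams $R$ with $|\word(R)|=|\cS|=\ell(w)+\mathrm{ex}(Q)$, i.e. to the pipe dreams of minimal excess $\mathrm{ex}(Q)$ in the orbit. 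For a general interior face coming from $P$, the face $\cQ_{0,n}\smallsetminus\word(P)$ has codimension $|\word(P)|-|\cS|=\mathrm{ex}(P)-\mathrm{ex}(Q)$ inside the complex, which is exactly the claimed identity. I would note that $\mathrm{ex}(Q)$ is indeed the minimal excess in the orbit: slide moves either fix $\word(P)$ or merge two equal consecutive letters into one, so they never increase the number of crosses, and $Q$ is the unique orbit representative whose word already equals its $\widetilde\delta$-reduction.

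Finally I would assemble these observations: substituting the identity into the definition of $\Gc^{(\beta)}_Q$ and reindexing the sum by interior faces via the bijection from the Proposition yields
\[
\Gc_Q^{(\beta)}=\sum_{P\in\dst^{-1}(Q)}\beta^{\mathrm{ex}(P)-\mathrm{ex}(Q)}\xx^P=\sum_{P\in\mathrm{int}(\widetilde\Delta(\cQ_{0,n},\cS))}\beta^{\codim(P)}\xx^P,
\]
which is the asserted formula. I expect no difficulty beyond carefully matching the codimension convention with the excess, and confirming that the monomial $\xx^P$ attached to a face is genuinely preserved under the bijection, both of which follow immediately from the definitions already in place.
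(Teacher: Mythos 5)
Your proof is correct and takes essentially the same route as the paper: the paper deduces Corollary~\ref{cor:glide} directly from the preceding Proposition identifying the glide orbit $\dst^{-1}(Q)$ with the interior faces of $\widetilde\Delta(\cQ_{0,n},\word(Q))$, leaving the exponent bookkeeping implicit. The identity $\mathrm{ex}(P)-\mathrm{ex}(Q)=\codim(P)$, which you verify via $|\word(R)|=\ell(w)+\mathrm{ex}(R)$ and the purity of the slide complex with facets of word-length $|\word(Q)|$, is exactly the detail the paper omits, and your verification of it is sound.
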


By specializing at $\beta=0$, we recover a similar statement for slide polynomials.

\begin{corollary}\label{cor:slide}
Let $Q\in\QPD_0(w)$. Then
\[
\Ff_Q=\sum_{P} \xx^P,
\]
where the sum is taken over all facets $P$ of the complex $\widetilde\Delta(\cQ_{0,n}, \mathrm{word}(Q))$.
\end{corollary}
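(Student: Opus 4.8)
\textbf{Proof plan for Corollary~\ref{cor:slide}.}
The plan is to obtain this statement as the $\beta=0$ specialization of Corollary~\ref{cor:glide}, so the whole argument is essentially a bookkeeping of which terms survive the substitution. First I would recall that by Corollary~\ref{cor:glide} we already have
\[
\Gc_Q^{(\beta)}=\sum_{P\in\mathrm{int}(\widetilde\Delta(\cQ_{0,n}, \mathrm{word}(Q)))} \beta^{\mathrm{codim}(P)}\xx^P,
\]
where the sum runs over all interior faces of the slide complex, each weighted by a power of $\beta$ equal to its codimension. Setting $\beta=0$ kills every term of positive codimension and retains exactly the codimension-zero faces, i.e.\ the facets of $\widetilde\Delta(\cQ_{0,n},\mathrm{word}(Q))$.

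The second step is to check that this specialization on the polynomial side matches the passage from glide polynomials to slide polynomials on the combinatorial side. Here I would invoke the definitions from \S~\ref{ssec:slidepoly}: the glide polynomial is $\Gc^{(\beta)}_Q=\sum_{P\in\dst^{-1}(Q)}\beta^{\mathrm{ex}(P)-\mathrm{ex}(Q)}\xx^P$, and for a reduced quasi-Yamanouchi pipe dream $Q\in\QPD_0(w)$ one has $\mathrm{ex}(Q)=0$, so the exponent of $\beta$ is simply $\mathrm{ex}(P)$. Setting $\beta=0$ therefore retains exactly the reduced pipe dreams $P$ (those with $\mathrm{ex}(P)=0$) in the glide orbit, and these are precisely the members of the slide orbit $\dst_0^{-1}(Q)$; the resulting sum is $\Ff_Q=\sum_{P\in\dst_0^{-1}(Q)}\xx^P$ by definition. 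Thus $\Gc^{(0)}_Q=\Ff_Q$, and the left-hand sides of the two corollaries agree after specialization.

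Finally I would reconcile the two descriptions of the surviving index set, namely that the codimension-zero interior faces of $\widetilde\Delta(\cQ_{0,n},\mathrm{word}(Q))$ correspond bijectively to the reduced pipe dreams in the glide orbit of $Q$. This is already furnished by the preceding proposition, which establishes that interior faces of the slide complex are in bijection with the pipe dreams of the glide orbit of $Q$ in a way compatible with $\mathrm{word}$; under this bijection the excess $\mathrm{ex}(P)$ equals the codimension of the corresponding face (as used implicitly in Corollary~\ref{cor:glide}), so codimension-zero faces match reduced pipe dreams, i.e.\ the slide orbit. The main obstacle, if any, is merely to verify that the codimension-versus-excess identification is clean enough that the $\beta=0$ specializations of the two sides land on the same index set; but since both sides are simply the $\beta\to0$ limits of the two expressions for $\Gc^{(\beta)}_Q$ already shown equal in Corollary~\ref{cor:glide}, the equality is immediate and the proof reduces to the remark that specializing a proven identity is legitimate.
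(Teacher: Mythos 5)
Your proposal is correct and takes essentially the same route as the paper, whose entire proof is the one-line remark that Corollary~\ref{cor:slide} follows from Corollary~\ref{cor:glide} by specializing at $\beta=0$. The extra bookkeeping you supply --- that $\mathrm{ex}(Q)=0$ for reduced $Q$, that the codimension-zero interior faces surviving the substitution are exactly the facets, and that the reduced members of the glide orbit $\dst^{-1}(Q)$ are precisely the slide orbit $\dst_0^{-1}(Q)$ --- is exactly what the paper leaves implicit, and it is all accurate.
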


	\begin{figure}[ht]
		$$\includegraphics[width=17cm]{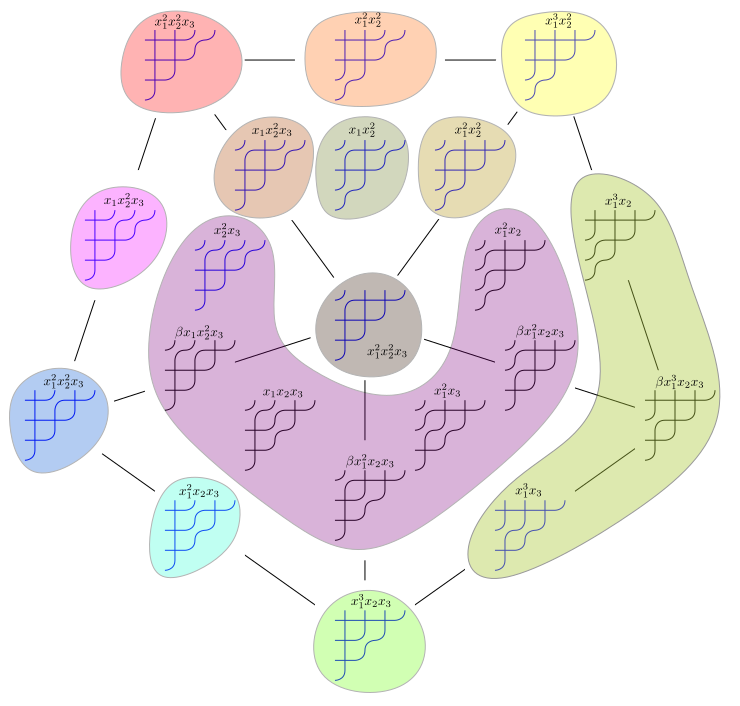}$$
		\caption{Pipe dream complex $w=\overline{1432}$ represented as a union of the interiors of slide complexes}
		\label{slide-complex}
	\end{figure}

\begin{example} Fig.\,\ref{slide-complex} represents the pipe dream complex for the permutation $w=\overline{1432}$ as a disjoint union of the interiors of slide complexes. The quasi-Yamanouchi pipe dreams are depicted in blue. For each pipe dream, we have the  monomial $\beta^{\mathrm{ex}(P)-\mathrm{ex}(Q)}\xx^P$ from the corresponding glide polynomial.
\end{example}

For $k\in\mathbb Z_{\geq 0}$, let $\QPD_k(w)=\{Q\in\QPD(w)\mid\mathrm{ex}(Q)=k\}$. The following corollary states that the alternating sum of the numbers of quasi-Yamanouchi pipe dreams with a given excess is 1.
\begin{corollary}
For each permutation $w\in \Sc_n$, we have the following equality:
\[
\sum_{k=0}^{n(n-1)/2}(-1)^k |\QPD_k(w)|=1.
\]
\end{corollary}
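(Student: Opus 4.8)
The plan is to reduce the identity to an evaluation of each glide polynomial at $\xx=(1,\dots,1)$, $\beta=-1$, and then to obtain that evaluation by repeating the Euler-characteristic computation of Corollary~\ref{euler} on the corresponding slide complex. First I would rewrite the left-hand side as a signed count of quasi-Yamanouchi pipe dreams: since $\QPD_k(w)=\{Q\in\QPD(w)\mid\mathrm{ex}(Q)=k\}$ and $\QPD(w)$ is the disjoint union of the $\QPD_k(w)$, one has
\[
\sum_{k=0}^{n(n-1)/2}(-1)^k|\QPD_k(w)|=\sum_{Q\in\QPD(w)}(-1)^{\mathrm{ex}(Q)}.
\]
On the other hand, specializing the identity $\Gf^{(\beta)}_w=\sum_{Q\in\QPD(w)}\beta^{\mathrm{ex}(Q)}\Gc^{(\beta)}_Q$ (established just after the definition of glide polynomials) at $\xx=(1,\dots,1)$, $\beta=-1$, and invoking Corollary~\ref{euler}, which gives $\Gf^{(-1)}_w(1,\dots,1)=1$, yields
\[
1=\sum_{Q\in\QPD(w)}(-1)^{\mathrm{ex}(Q)}\,\Gc^{(-1)}_Q(1,\dots,1).
\]
Comparing the two displays, it suffices to prove the key claim $\Gc^{(-1)}_Q(1,\dots,1)=1$ for every $Q\in\QPD(w)$.

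The next step is to establish this key claim topologically by means of Corollary~\ref{cor:glide}, which presents $\Gc^{(\beta)}_Q$ as the sum $\sum_{P\in\mathrm{int}(\widetilde\Delta(\cQ_{0,n},\word(Q)))}\beta^{\codim P}\xx^P$ over the interior faces of the slide complex $\widetilde\Delta(\cQ_{0,n},\cS)$, where $\cS=\word(Q)$. Setting $\xx=(1,\dots,1)$ and $\beta=-1$ turns this into the alternating face-count
\[
\Gc^{(-1)}_Q(1,\dots,1)=\sum_{P\in\mathrm{int}(\widetilde\Delta(\cQ_{0,n},\cS))}(-1)^{\codim P}.
\]
By Theorem~\ref{thm:main} this slide complex is a ball or a sphere, being a sphere exactly when $\widetilde\delta(\cQ_{0,n})=\cS$. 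Since the word $\cQ_{0,n}$ has no two equal consecutive letters, $\widetilde\delta(\cQ_{0,n})=\cQ_{0,n}$, so the sphere case would force $\cS=\cQ_{0,n}$ and hence $w=\delta(\cS)=\delta(\cQ_{0,n})=w_0$. Thus for $w\ne w_0$ every slide complex occurring here is a ball, and running verbatim the bookkeeping from the proof of Corollary~\ref{euler} — writing the interior faces as all faces minus boundary faces, accounting for the shift of codimension by one on passing from $\widetilde\Delta$ to $\partial\widetilde\Delta$, and plugging in the Euler characteristics of a ball and of its boundary sphere — collapses the alternating sum to $1$. The degenerate case $w=w_0$ is immediate: then $\QPD(w_0)$ consists of the single reduced pipe dream of shape $w_0$, with $\mathrm{ex}=0$, so both sides equal $1$.

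I expect the only real subtlety to be the verification that every slide complex arising in this argument is a ball rather than a sphere, i.e.\ the observation $\widetilde\delta(\cQ_{0,n})=\cQ_{0,n}$ together with its consequence that the sphere case is confined to $w=w_0$; once this is in place the Euler-characteristic computation is literally the one already carried out in Corollary~\ref{euler} and introduces no new ideas. An alternative but equivalent route would sum the decomposition~\eqref{eq:decomp} directly; however, there the codimension of a face is measured inside each slide complex rather than inside the ambient pipe dream complex, so the glide-polynomial formulation above is what keeps the signs and codimensions cleanly aligned.
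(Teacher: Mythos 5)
Your proposal is correct and follows essentially the same route as the paper: specialize $\Gf^{(\beta)}_w=\sum_{Q}\beta^{\mathrm{ex}(Q)}\Gc^{(\beta)}_Q$ at $\beta=-1$, $\xx=(1,\dots,1)$, use $\Gf^{(-1)}_w(1,\dots,1)=1$ from Corollary~\ref{euler}, and obtain $\Gc^{(-1)}_Q(1,\dots,1)=1$ from the Euler-characteristic computation on the slide complex via Corollary~\ref{cor:glide}. Your only addition is the explicit check (via $\widetilde\delta(\cQ_{0,n})=\cQ_{0,n}$) that the sphere case is confined to $w=w_0$, a point the paper's proof passes over silently when it asserts the slide complexes are balls; this is a welcome tightening but not a different argument.
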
 
\begin{proof}
Since the slide complexes are homeomorphic to balls, and the Euler characteristic of a ball is equal to 1, we obtain similarly to Corollary~\ref{euler} that
\[
\Gc^{(-1)}_Q(1,\ldots,1)=1
\]
	for each $Q\in \QPD(w)$. Let us specialize the equality
\[
	\Gf^{(\beta)}_w(\xx)=\sum_{Q\in\QPD(w)}\beta^{\mathrm{ex}(Q)}\Gc^{(\beta)}_Q(\xx)
\]
at $\beta=-1,\xx=(1,\ldots, 1)$, and use the fact that $\Gf^{(-1)}_w(1,\ldots,1)=\Gc^{(-1)}_Q(1,\ldots,1)=1$ for each $w\in\Sc_n, Q\in\QPD(w)$.  We obtain the desired formula:
\[
1=\sum_{Q\in\QPD(w)}(-1)^{\mathrm{ex}(Q)}=\sum_{k=0}^{n(n-1)/2}(-1)^k |\QPD_k(w)|.
\]
\end{proof}

\subsection{Remark on flip graphs}\label{ssec:pilaud}

V.\,Pilaud and C.\,Stump~\cite{PilaudStump13} describe an algorithm\footnote{We are grateful to the referee for bringing the paper~\cite{PilaudStump13} to our attention.} for indexing all the facets of a subword complex. This is done by constructing the so-called \emph{flip graph} of this complex. This graph, first defined in ~\cite[Rem.~4.5]{KnutsonMiller04}, is the facet adjacency graph of a subword complex: its vertices corresponding to facets of a subword complex, and two vertices are connected by an edge if the two corresponding facets share a codimension 1 face. This graph admits a canonical orientation, turning it into a poset. The arrows in this orientation are called \emph{increasing flips}.  This poset has a unique maximal and a unique minimal element, called the \emph{positive} (resp.~\emph{negative}) \emph{greedy facet}. The arrows of the opposite graph are called \emph{decreasing flips}; cf.~\cite[\S~4.2]{PilaudStump13}.

This construction is applicable to any Coxeter system, in particular, to the free Coxeter group $(\widehat Pi,\Sigma)$. It turns out that for this Coxeter system, one recovers the notion of slide moves. The following two propositions are immediate.

\begin{proposition} Let $Q$ be a reduced quasi-Yamanouchi pipe dream. Slide moves on the set $\dst_0^{-1}(Q)$ are precisely the decreasing flips of facets of  $\Delta(\cQ_{0,n},\mathrm{word}(Q))$ (or, equivalently, facets of $\int(\Delta(\cQ_{0,n},\mathrm{word}(Q)))$).
\end{proposition}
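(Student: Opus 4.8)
The plan is to unwind both sides of the claimed equality into statements about subwords and the combinatorial description of slide moves already established in this subsection. The proposition asserts that, for a reduced quasi-Yamanouchi pipe dream $Q$, the slide moves on the fibre $\dst_0^{-1}(Q)$ are exactly the decreasing flips among the facets of $\Delta(\cQ_{0,n},\word(Q))$. Since the slide complex $\widetilde\Delta(\cQ_{0,n},\word(Q))$ is the subword complex $\Delta(\cQ_{0,n},\word(Q))$ for the free Coxeter group $\widehat\Pi$ (as observed in the proof of Theorem~\ref{thm:main}), its facets are indexed by the pipe dreams of the slide orbit $\dst_0^{-1}(Q)$, and I can phrase everything in the language of that subword complex. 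The core task is therefore to match the combinatorial description of a slide move $S_i$ with the local ``flip'' operation on facets that exchanges one letter of a subword for an adjacent one.

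First I would recall, from the proof of the previous proposition in \S~\ref{ssec:scpd}, precisely how a slide move acts at the level of words: when $S_i$ acts non-identically on $P$, moving a cross from $(i,j)$ to $(i+1,j-1)$, it replaces the letter $\sigma_k$ of $\word(P)$ by the letter $\sigma_{k+m}$, where $\sigma_k=\sigma_{k+m}=s_{i+j-1}$ and none of the intermediate letters $\sigma_{k+1},\dots,\sigma_{k+m-1}$ occur in $\word(P)$. In the two adjacent facets, the symmetric difference is a single exchanged letter, so the two facets share a codimension-$1$ face; this identifies each non-trivial slide move with an edge of the flip graph. Second, I would verify that this edge is a \emph{decreasing} flip, i.e.\ oriented from the source facet to the slide image. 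The slide move pushes a cross strictly to the right in $\cQ_{0,n}$ (since $\cQ_{0,n}$ is read right-to-left, top-to-bottom, the letter $\sigma_{k+m}$ sits later in the word), and by the construction of the canonical orientation in~\cite[\S~4.2]{PilaudStump13} the greedy facets are the extremal ones under exactly this ``push letters as far right as possible'' process. Hence moving to the later letter is a decreasing flip, consistent with the fact that the quasi-Yamanouchi $Q$ is the unique rightmost appearance of $\word(Q)$ and thus the minimal (negative greedy) facet.

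Third, I would prove the converse inclusion: every decreasing flip on a facet of $\Delta(\cQ_{0,n},\word(Q))$ arises from a slide move. A decreasing flip replaces a letter of the subword by a later equal letter, with no occurrence of that letter in between; this is precisely the situation listed as the three bullet points in the previous proof, which was shown there to correspond to a slide move on the associated pipe dream. Combining the two inclusions gives the bijection between slide moves on $\dst_0^{-1}(Q)$ and decreasing flips, and the parenthetical equivalence in the statement follows because the facets of $\Delta(\cQ_{0,n},\word(Q))$ and of its interior coincide (the facets are top-dimensional faces, all interior by purity). The main obstacle I anticipate is the bookkeeping needed to confirm the \emph{orientation} of the flip matches the direction of the slide move rather than its inverse: I must check that the canonical flip orientation of~\cite{PilaudStump13}, when specialized to $\widehat\Pi$ and the fixed word $\cQ_{0,n}$, really sends ``move a cross southwest'' to a \emph{decreasing} rather than increasing flip, which amounts to carefully reconciling Pilaud–Stump's conventions with the right-to-left reading order of $\cQ_{0,n}$.
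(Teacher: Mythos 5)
Your forward direction is essentially right, and it is also what the paper has in mind (the paper states this proposition without proof, as ``immediate'' from the proof of the proposition in \S~\ref{ssec:scpd}): a non-identical slide move exchanges the used position $\sigma_k$ for the \emph{next} occurrence $\sigma_{k+m}$ of the same letter, with all intermediate positions unused, so the two facets share a codimension-one face, and since the facet loses the later position $k+m$ and gains the earlier position $k$, the flip is decreasing in the convention of~\cite{PilaudStump13}. But your converse step contains a genuine error: you claim that a decreasing flip ``replaces a letter of the subword by a later equal letter, with no occurrence of that letter in between,'' and that this matches the three bullet points of the previous proof. That is not what facet adjacency gives. Two facets of $\widetilde\Delta(\cQ_{0,n},\cS)$ are flip-adjacent exactly when one is obtained from the other by exchanging a used position for an unused position carrying the same letter such that no position of the \emph{subword} lies strictly between them; unused occurrences of the \emph{same} letter may perfectly well lie in between, whereas the second bullet point of the previous proof explicitly forbids this. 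Concretely: take $n=4$, $w=\overline{1243}$, and $Q$ the quasi-Yamanouchi pipe dream with a single cross at $(3,1)$, so $\word(Q)=(s_3)$ and $\dst_0^{-1}(Q)$ consists of the three pipe dreams with one cross at $(1,3)$, $(2,2)$, $(3,1)$, i.e.\ at positions $1$, $4$, $6$ of $\cQ_{0,4}=(s_3s_2s_1s_3s_2s_3)$. Any two of the three facets share all but one vertex (e.g.\ the complements of $\{1\}$ and $\{6\}$ share the codimension-one face with complement $\{1,6\}$, which contains $(s_3)$), so the flip graph is a triangle with \emph{three} decreasing flips, while there are only \emph{two} slide moves, $(1,3)\to(2,2)$ and $(2,2)\to(3,1)$: the flip jumping the cross from $(1,3)$ directly to $(3,1)$ over the unused same-letter position $(2,2)$ is a composition of two slide moves, not one. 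So the inclusion ``decreasing flip $\Rightarrow$ slide move'' fails as you argue it; slide moves are only the decreasing flips between \emph{consecutive} occurrences of a letter, and the proposition is correct only under that restricted reading of ``flip'' (a point the paper itself glosses over by declaring the statement immediate).

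A secondary slip concerns exactly the orientation bookkeeping you flagged as the delicate point: you assert that $Q$, being the rightmost appearance, is ``the minimal (negative greedy) facet.'' In the conventions of~\cite{PilaudStump13} the \emph{positive} greedy facet is the lexicographically first facet and the unique source of the increasing flip graph; since the crosses of $Q$ sit as late as possible, its facet (the complement) contains the earliest positions, is lexicographically first, and admits only increasing flips outward --- in the example above it is $\{1,2,3,4,5\}$. This is consistent with the paper's companion proposition identifying quasi-Yamanouchi pipe dreams with \emph{positive} greedy facets, and your ``negative greedy'' label has it backwards. To repair the proof you should (i) state the correct characterization of flips in the free Coxeter group (exchange of equal letters with no \emph{used} position in between), (ii) either restrict to flips between consecutive occurrences of a letter or amend the statement accordingly, and (iii) redo the greedy-facet identification with the conventions of~\cite{PilaudStump13} made explicit.
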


\begin{proposition} A reduced pipe dream $P$ is quasi-Yamanouchi if and only if the corresponding face of the slide complex $\Delta(\cQ_{0,n},\mathrm{word}(P))$ is its positive greedy facet.
\end{proposition}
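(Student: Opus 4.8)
The plan is to read the statement off directly from the preceding proposition together with the general theory of greedy facets in~\cite{PilaudStump13}, after translating the quasi-Yamanouchi condition into the language of flips. There is essentially no computation to do; the whole content is a dictionary.

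First I would fix the slide complex in which $P$ lives. Since $P$ is reduced, its word $\word(P)$ is a reduced word for $w$ in $\Sc_n$, so it contains no two equal consecutive letters; hence $\widetilde\delta(\word(P))=\word(P)=:\cS$, and by Theorem~\ref{thm:main} the slide complex $\widetilde\Delta(\cQ_{0,n},\cS)$ is a subword complex for the free Coxeter group $\widehat\Pi$, homeomorphic to a ball or a sphere. Its facets are exactly the occurrences of $\cS$ as a subword of $\cQ_{0,n}$, and by the preceding discussion these are in bijection with the reduced pipe dreams of the slide orbit $\dst_0^{-1}(\dst_0(P))$; the facet attached to $P$ is its own occurrence $\word(P)$.

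The key step is the translation of the combinatorial condition. By Definition~\ref{def:quasiyamanouchi}, $P$ is quasi-Yamanouchi precisely when no slide move $S_i$ acts on it non-trivially. By the preceding proposition, the non-trivial slide moves on $\dst_0^{-1}(\dst_0(P))$ are exactly the decreasing flips of the facets of $\widetilde\Delta(\cQ_{0,n},\cS)$, so $P$ is quasi-Yamanouchi if and only if its facet admits no decreasing flip. By the theory of~\cite{PilaudStump13}, the increasing flip graph of a subword complex is the comparability graph of a poset with a unique source and a unique sink; the unique facet from which every flip is increasing --- equivalently, the unique facet admitting no decreasing flip --- is the positive greedy facet. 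Chaining these equivalences gives the claim, and the uniqueness of the positive greedy facet is consistent with the fact (Assaf--Searles~\cite[Lemma~3.12]{AssafSearles17}) that each slide orbit contains exactly one quasi-Yamanouchi pipe dream.

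The one point that needs genuine care --- and the main potential obstacle --- is the matching of orientation conventions: I must check that \emph{admitting no decreasing flip} corresponds to the \emph{positive} greedy facet rather than the negative one. This reduces to verifying that a slide move, which shifts a cross to a strictly later position of $\cQ_{0,n}$ (equivalently, replaces the facet element $k+m$ by the smaller element $k$), is decreasing in the sign convention of~\cite{PilaudStump13}, and that in that convention the positive greedy facet is the source of the increasing-flip order. This is precisely the sign bookkeeping already underlying the preceding proposition, so once it is pinned down there, the present statement follows with no further work --- which is why it can be stated as immediate.
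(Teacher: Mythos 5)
Your proposal is correct and is precisely the argument the paper intends: the paper states both propositions of \S~\ref{ssec:pilaud} as immediate, the implicit justification being exactly your dictionary (quasi-Yamanouchi $\Leftrightarrow$ no nontrivial slide move $\Leftrightarrow$ no decreasing flip $\Leftrightarrow$ positive greedy facet, via the preceding proposition and the Pilaud--Stump theory applied to the free Coxeter group $\widehat\Pi$). Your extra care with the orientation convention --- checking that a slide move, which pushes a cross to a later position in $\cQ_{0,n}$, is a decreasing flip, so the facet with no decreasing flip is the source of the increasing-flip order, i.e.\ the \emph{positive} greedy facet --- is the one genuinely checkable point, and you resolve it correctly.
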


\def\cprime{$'$}


\end{document}